\documentclass[a4paper,reqno]{amsart}
\usepackage{amsmath,amsfonts,amscd,amssymb,graphicx}
\usepackage{ifthen} 
%
%
%
\allowdisplaybreaks[4]
%
\numberwithin{figure}{section}
\numberwithin{table}{section}
\numberwithin{equation}{section}
%
\newcommand{\rz}{\mathbb{R}}       
%
\theoremstyle{plain}
\newtheorem{SATZ}{Proposition}[section]   
\newtheorem{THM}[SATZ]{Theorem}
\newtheorem{LEM}[SATZ]{Lemma}

%
\theoremstyle{definition}

\newtheorem{BEM}[SATZ]{Remark}

%

%
%
\DeclareMathOperator{\diam}{diam}
%
\DeclareMathOperator{\Div}{div}
%

%

%

%

%
%
\newcommand{\dx}{{\rm d}}
%
\newcommand{\halb}{\frac 12}
%
%
\DeclareMathOperator{\shp}{C}
\newcommand{\shape}[1][\CT]{\shp_{#1}}
%
%
\newcommand{\hdiamE}[1][E]{h_{#1}}
\newcommand{\hdiamK}[1][K]{h_{#1}}
%

\newcommand{\be}{\beta}
\newcommand{\ga}{\gamma}
\newcommand{\Ga}{\Gamma}

\newcommand{\De}{\Delta}
\newcommand{\ep}{\varepsilon}
\newcommand{\ka}{\kappa}
\newcommand{\la}{\lambda}

\newcommand{\om}{\omega}
\newcommand{\Om}{\Omega}
\newcommand{\vfi}{\varphi}
\newcommand{\si}{\sigma}

%
%
\newcommand\CE{\mathcal{E}}
\newcommand\CI{\mathcal{I}}
\newcommand\CM{\mathcal{M}}

\newcommand\CT{\mathcal{T}}
%
%

\DeclareMathOperator{\const}{c}
\newcommand{\cemb}{\const_p}
\newcommand{\creac}{\const_b}
\newcommand{\cfr}{\const_F}
%
%
\newcommand{\betrag}[2][]{\left\lvert #2 \right\rvert_{#1}} 
\newcommand{\norm}[1]{\left\lVert #1 \right\rVert} 
\newcommand{\normL}[3][2]{
	\ifthenelse{\equal{#1}{2}}
	{\ifthenelse{\equal{#2}{\Om}}
		{\left\lVert #3 \right\rVert}
		{\left\lVert #3 \right\rVert_{#2}}
	}
	{\left\lVert #3 \right\rVert_{L^{#1}(#2)}}
	} 
\newcommand{\normE}[2][]{\left\lVert{\hskip -0.25em} \left\lvert #2 \right\rVert {\hskip -0.25em} \right\rvert_{#1}} 
\newcommand{\normD}[2][]{\left\lVert{\hskip -0.25em} \left\lvert #2 
\right\rVert 
{\hskip -0.25em} \right\rvert_{\ast #1}} 
\newcommand{\normX}[1]{\left\lVert #1 \right\rVert_X} 
\newcommand{\normXD}[3][0]{\left\lVert #3 \right\rVert_{L^2(#1,#2;H^{-1}(\Om))}} 
%
%
\newcommand{\paar}[2]{\left\langle #1\,,\,#2 \right\rangle} 
%
%

%
%
\newcommand\Va{\mathbf{a}}
\newcommand{\cd}{\Va \cdot \nabla}
\newcommand{\cdm}{\bar{\Va}_{\CM_n}\cdot\nabla}
\newcommand{\cdt}{\bar{\Va}_{\CT_n}\cdot\nabla}
\newcommand\Vn{\mathbf{n}}
%
%
\newcommand{\jump}[2][E]{{\mathbb J}_{#1}(#2)}
%
%

%
%
\newcommand{\mdiam}[1]{\hslash_{#1}}
%
%
\title[Non-Stationary Non-Linear Convection-Diffusion Equations]{A Posteriori Error Estimates for Non-Stationary Non-Linear Convection-Diffusion Equations}
\author{R. Verf{\"u}rth}
\address{Ruhr-Uni\-ver\-si\-t{\"a}t Bo\-chum \\ Fa\-kul\-t{\"a}t f{\"u}r Ma\-the\-ma\-tik \\ D-44780 Bo\-chum \\ Germany}
\email{ruediger.verfuerth@ruhr-uni-bochum.de}
\date{\today}
\keywords{A posteriori error estimates, non-stationary non-linear convection-diffusion equations, stochastic pdes}
\subjclass{65N30, 65N15, 65J10}
\date{\today}
%
%
\begin{document}
\begin{abstract}
Motivated by stochastic convection-diffusion problems we derive a posteriori error estimates for non-stationary non-linear convection-diffusion equations acting as a deterministic paradigm. The problem considered here neither fits into the standard linear framework due to its non-linearity nor into the standard non-linear framework due to the lacking differentiability of the non-linearity. Particular attention is paid to the interplay of the various parameters controlling the relative sizes of diffusion, convection, reaction, and non-linearity (noise).
\end{abstract}
\maketitle
%
%
%
%
\section{Introduction}\label{S:intro}
Recently stochastic convection-diffusion problems have attracted considerable interest \cite{BanBrzNekPro,CarMulPro12,DunHauPro12,DunPro16,HutJen,LiuRoe,WeiHutJenKru}. To obtain efficient numerical discretizations adaptivity is mandatory. Yet, for these problems, adaptivity in general and a posteriori error estimates in particular are still in their infancy. As a first step to close this gap we consider in this article deterministic non-stationary convection-diffusion equations with a non-linearity of the form $\nu \vfi(u) g$ modelling the noise (cf. equation \eqref{E:nonlin-conv-diff} below). They neither fit into the framework of \cite[\S 3]{TobVer15} and \cite[\S 6.2]{Ver13} due to the non-linearity, nor into the framework of \cite{AmrWih,GeoLakVir}, \cite{Ver98d, Ver98e}, and \cite[\S 6.6]{Ver13} due to the lacking differentiability of the non-linearity or its lacking strong monotonicity. Therefore, in what follows, we will carefully adapt the arguments of \cite[\S 3]{TobVer15} and \cite[\S 6.2]{Ver13} to catch the interplay of the various parameters controlling the relative size of diffusion, convection, reaction, and non-linearity.

The article is organized as follows. In Section \ref{S:varprob} we present the differential equation and its variational formulation. Section  \ref{S:discprob} gives the discretization which is a stabilized $\theta$-scheme with a possibly explicit treatment of the non-linearity. In Section \ref{S:apost} we then derive the a posteriori error estimates (cf. Theorem \ref{T:apost}).
%
%
\section{Variational Problem}\label{S:varprob}
As a deterministic paradigm for stochastic convection-diffusion problems, we consider the following non-stationary non-linear convection-diffusion equations:
\begin{equation}\label{E:nonlin-conv-diff}
\begin{aligned}
\partial_t u - \ep \De u + \cd u + b u &= \nu \vfi(u) g &&\text{in } \Om \times (0,T], \\
u &= 0 &&\text{on } \Ga \times (0,T], \\
u(\cdot,0) &= u_0 &&\text{in } \Om.
\end{aligned}
\end{equation}
Here, $\Om \subset \rz^d$, $d \in \{2, 3\}$, is a bounded polyhedral domain with Lipschitz boundary $\Ga$. The final time $T$ is arbitrary, but kept fixed in what follows. We assume that the data satisfy the following conditions (compare \cite[\S 3]{TobVer15} and \cite[\S 6.2]{Ver13}):
\begin{enumerate}
\renewcommand{\labelenumi}{(A\arabic{enumi})}
\item $\ep > 0$, $\nu \ge 0$,
\item $g \in L^\infty(\Om \times (0,T])$, $\Va \in C(0,T;W^{1,\infty}(\Om)^d)$, $b \in L^\infty(\Om \times (0,T])$, $u_0 \in L^2(\Om)$,
\item there are two constants $\be \ge 0$ and $\creac \ge 0$, which do not depend on $\ep$, such that $-\halb \Div \Va + b \ge \be$ in $\Om \times (0,T]$ and $\normL[\infty]{\Om \times (0,T]}{b} \le \creac \be$,
\item the function $\vfi$, modelling the noise, is Lipschitz continuous, i.e. \\
$\betrag{\vfi(s_1) - \vfi(s_2)} \le L \betrag{s_1 - s_2}$ for all $s_1, s_2 \in \rz$.
\end{enumerate}
Examples of functions satisfying assumption (A4) with $L = 1$ are $\vfi(s) = 1 + \betrag{s}$ and $\vfi(s) = \sqrt{1 + s^2}$.
\\ 
We will be particularly interested in the convection-dominated regime $\ep \ll 1$. At the expense of more technical arguments and additional data oscillations, the second assumption can be replaced by slightly weaker conditions concerning the temporal regularity. The third assumption allows us to simultaneously handle the case of a non-vanishing reaction term and the one of absent reaction. If $b \ne 0$ we may assume without loss of generality that $\creac \ge 1$; if $b = 0$ we set $\be = 0$ and $\creac = 1$.

We denote by $L^p(\Om)$ and $W^{k,p}(\Om)$, $1 \le p \le \infty$, $k \ge 1$, the standard Lebesgue and Sobolev spaces equipped with their standard norms $\normL[p]{\Om}{\cdot}$ and $\norm{\cdot}_{W^{k,p}(\Om)}$ respectively, by $H^1_0(\Om)$ the space of all functions in $W^{1,2}(\Om)$ with vanishing trace, and by $H^{-1}(\Om)$ the dual space of $H^1_0(\Om)$. The norms of $H^1_0(\Om)$ and $H^{-1}(\Om)$ depend on the parameters $\ep$ and $\be$ and are specified in \eqref{E:Enorm} and \eqref{E:Dnorm} below. Further, we define a bilinear form $B : H^1_0(\Om) \times H^1_0(\Om) \rightarrow \rz$ and a non-linear map $N : H^1_0(\Om) \rightarrow H^{-1}(\Om)$ by setting for all $u, v \in H^1_0(\Om)$
\begin{equation}\label{E:defB-N}
\begin{split}
B(u,v) &= \int_\Om \left( \ep \nabla u \cdot \nabla v + \cd u v + b u v \right), \\
\paar{N(u)}{v} &= \int_\Om \nu \vfi(u) g v.
\end{split}
\end{equation}
Remind that $B$ and $N$ depend on time $t$ due to the functions $\Va$, $b$, and $g$.

The variational formulation of problem \eqref{E:nonlin-conv-diff} then is to find a function $u$ in $L^2(0,T;H^1_0(\Om))$ with its weak temporal derivative $\partial_t u$ in $L^2(0,T;H^{-1}(\Om))$ such that $u(\cdot,0) = u_0$ almost everywhere and
\begin{equation}\label{E:varprob}
\paar{\partial_t u}{v} + B(u,v) = \paar{N(u)}{v}
\end{equation}
for all $v \in H^1_0(\Om)$ and almost all $t \in (0,T)$.

In what follows we assume that problem \eqref{E:varprob} admits at least one solution.
%
%
\section{Discrete Problem}\label{S:discprob}
For the space-time discretization of problem \eqref{E:nonlin-conv-diff}, we consider partitions $\CI = \left\{ [t_{n-1},t_n] : 1 \le n \le N_\CI \right\}$ of the time-interval $[0,T]$ into sub-intervals satisfying $0 = t_0 < \ldots < t_{N_\CI} = T$. For every $n$ with $1 \le n \le N_\CI$, we denote by $I_n = [t_{n-1},t_n]$ the $n$-th sub-interval and by $\tau_n = t_n - t_{n-1}$ its length. With every intermediate time $t_n$, $0 \le n \le N_\CI$, we associate a partition $\CT_n$ of $\Om$ and a corresponding finite element space $V(\CT_n)$. The partitions $\CI$ and $\CT_n$ and the spaces $V(\CT_n)$ must satisfy the following assumptions (compare \cite[\S 3]{TobVer15} and \cite[\S 6.2]{Ver13}):
\begin{itemize}
\item The closure of $\Om$ is the union of all elements in $\CT_n$.
\item Every element has at least one vertex in $\Om$.
\item Every element in $\CT_n$ is either a simplex or a parallelepiped, i.e.\ it is the image of the $d$-dimensional reference simplex $\widehat K_d = \left\{ x\in \rz^d : x_1 \ge 0, \, \ldots, \, x_d \ge 0, \right.$\linebreak[4]$\left.x_1 + \ldots + x_d \le 1 \right\}$ or of the $d$-dimensional reference cube $\widehat K_d = [0, 1]^d$ under an affine mapping (\emph{affine-equivalence}).
\item Any two elements in $\CT_n$ are either disjoint or share a complete lower dimensional face of their boundaries (\emph{admissibility}).
\item Denoting by $\hdiamK$ the diameter of any element $K$ and by $\rho_K$ the diameter of the largest ball inscribed into $K$, the shape parameter
\begin{equation*}
\shape = \max_{1 \le n \le N_\CI} \max_{K \in \CT_n} \frac{\hdiamK}{\rho_K}
\end{equation*}
is of moderate size independently of $\ep$, $\be$, and $\nu$ (\emph{shape-regularity}).
\item For every $n$ with $1 \le n \le N_\CI$ there is an affine-equivalent, admissible, and shape-regular partition $\widetilde\CT_n$ such that it is a refinement of both $\CT_n$ and $\CT_{n-1}$ and such that
\begin{equation*}
\shape[\widetilde \CT,\CT] = \max_{\phantom{\widetilde \CT_n}1 \le n \le N_\CI\phantom{\widetilde \CT_n}} \max_{K \in \widetilde \CT_n}
\max_{\phantom{\widetilde \CT_n}K^\prime \in \CT_n; K \subset K^\prime\phantom{\widetilde \CT_n}} \frac{\hdiamK[K^\prime]}{\hdiamK}
\end{equation*}
is of moderate size independently of $\ep$, $\be$, and $\nu$ (\emph{transition condition}).
\item Each $V(\CT_n)$ consists of continuous functions which are piecewise polynomials, the degrees being at least one and being bounded uniformly with respect to all partitions $\CT_n$ and $\CI$ (\emph{degree condition}).
\end{itemize}
The transition condition is due to the simultaneous presence of finite element functions defined on different grids. Usually the partition $\CT_n$ is obtained from $\CT_{n-1}$ by a combination of refinement and of coarsening. In this case the transition condition only restricts the coarsening: it should not be too abrupt nor too strong.
\\
The lower bound on the polynomial degrees is needed for the construction of suitable quasi-interpolation operators. The upper bound ensures that the constants in inverse estimates are uniformly bounded.
\\
Notice that we do not impose any shape-condition of the form $\max_n \tau_n \le c \min_n \tau_n$.

For any parameter $\Theta \in [0,1]$ we set for abbreviation
\begin{equation}\label{E:ntheta}
\begin{split}
g^{n\Theta} &= \Theta g(\cdot,t_n) + (1 - \Theta) g(\cdot,t_{n-1}), \\
\Va^{n\Theta} &= \Theta \Va(\cdot,t_n) + (1 - \Theta) \Va(\cdot,t_{n-1}), \\
b^{n\Theta} &= \Theta b(\cdot,t_n) + (1 - \Theta) b(\cdot,t_{n-1})
\end{split}
\end{equation}
and
\begin{equation*}
\begin{split}
B^{n\Theta}(u, v) &= \int_\Om \left\{ \ep \nabla u \cdot \nabla v + \Va^{n\Theta} \cdot \nabla u v + b^{n\Theta} u v \right\}, \\
\paar{N^{n\Theta}(u)}{v} &= \int_\Om \nu \vfi(u) g^{n\Theta} v.
\end{split}
\end{equation*}

For the finite element discretization of problem \eqref{E:nonlin-conv-diff} we consider a stabilized $\theta$-scheme with a possibly explicit treatment of the non-linearity. More precisely we choose two parameters $\theta, \vartheta \in [0,1]$ and look for a sequence $u^n_{\CT_n} \in V(\CT_n)$, $0 \le n \le N_\CI$, such that $u^0_{\CT_0}$ is the $L^2$-projection of $u_0$ onto $V(\CT_0)$ and such that, for $n = 1, \ldots, N_\CI$ and $U^{n\Theta} = \Theta u^n_{\CT_n} + (1 - \Theta) u^{n-1}_{\CT_{n-1}}$, $\Theta \in \{\theta, \vartheta\}$,
\begin{equation}\label{E:discprob}
\int_\Om \frac{1}{\tau_n} (u^n_{\CT_n} - u^{n-1}_{\CT_{n-1}}) v_{\CT_n} + B^{n\theta}(U^{n\theta},v_{\CT_n}) + S^n(U^{n\theta},v_{\CT_n}) = \paar{N^{n\vartheta}(U^{n\vartheta})}{v_{\CT_n}}
\end{equation}
holds for all $v_{\CT_n} \in V(\CT_n)$.

Note that by choosing $\vartheta \ne \theta$ we may handle the non-linear and linear terms in \eqref{E:nonlin-conv-diff} differently. In particular we may choose $\vartheta = 0$ and $\theta \in \{ \halb, 1 \}$ thus using an explicit discretization for the non-linear term and an implicit one for the linear terms.

The term $S^n$ specifies the particular stabilization. It is supposed to be linear in its second argument and affine in its first argument. Note that $S^n$ may contain contributions of the data $g$. Of course, the choice $S^n = 0$ is also possible and corresponds to a standard finite element method without stabilization. Some popular choices of $S^n$ are as follows (cf. \cite{TobVer15} for more details and references):
\begin{itemize}
\item \emph{Streamline diffusion method:} Here, the stabilizing term has the form
\begin{equation*}
\begin{split}
\quad\quad\quad S^n(u,v) = \sum_{K \in \CT_n} \vartheta_K \int_K &\left\{-\ep \Delta u + \Va^{n\theta} \cdot \nabla u + b^{n\theta} u - \nu \vfi(u) g^{n\theta} \right\} \Va^{n\theta} \cdot \nabla v
\end{split}
\end{equation*}
with $\vartheta_K \normL[\infty]{K}{\Va} \le c_S \hdiamK$ for all $K \in \CT_n$ (cf. eg. \cite{HugBro79,RooStyTob08}).
\item \emph{Local projection scheme:} Denoting by $\CM_n$ a macro-partition such that every element in $\CM_n$ is the union of elements in $\CT_n$ and by $I - \ka_{\CM_n}$ the $L^2$-projection onto an appropriate discontinuous projection space $D(\CM_n)$ living on the partition $\CM_n$ and by $\bar{\Va}_{\CM_n}$ a piecewise constant approximation of $\Va^{n\theta}$ on $\CM_n$, we either have
\begin{equation*}
S^n(u,v) = \sum_{M \in \CM_n} \vartheta_M \int_M \ka_{\CM_n} \left( \cdm u \right)  \ka_{\CM_n} \left( \cdm v \right)
\end{equation*}
with $\vartheta_M \normL[\infty]{M}{\Va}\le c_S \hdiamK[M]$ for all $M \in \CM_n$ or
\begin{equation*}
S^n(u,v) = \sum_{M \in \CM_n} \vartheta_M \int_M \ka_{\CM_n} \left( \nabla u \right)  \ka_{\CM_n} \left( \nabla v \right)
\end{equation*}
with $\vartheta_M  \le c_S \normL[\infty]{M}{\Va} \hdiamK[M]$ for all $M \in \CM_n$ (cf. eg. \cite{HeTob12,KnoTob11,TobWin10}).
\item \emph{Subgrid scale approach:} Decomposing the solution space $V(\CT_n)$ into a space of resolvable scales $X(\CT_n)$ and 
a space of unresolvable scales $Y(\CT_n)$ such that $V(\CT_n) = X(\CT_n) \oplus Y(\CT_n)$ and denoting by $\Pi_n: V(\CT_n) \rightarrow Y(\CT_n)$ a projection operator with 
$X(\CT_n) = \ker (\Pi_n)$, we either have
\begin{equation*}
S^n(u,v) = \sum_{K \in \CT_n} \vartheta_K \int_K \left( \cdt \Pi_n(u) \right) \left( \cdt \Pi_n(v) \right)
\end{equation*}
with $\vartheta_K \normL[\infty]{K}{\Va}\le c_S \hdiamK[K]$ for all $K \in \CT_n$ or 
\begin{equation*}
S^n(u,v) = \sum_{K \in \CT_n} \vartheta_K \int_K  \nabla\Pi_n \left(u \right) \nabla\Pi_n \left(v_\CT \right)
\end{equation*}
with $\vartheta_K  \le c_S \normL[\infty]{K}{\Va} \hdiamK[K]$ for all $K \in \CT_n$ (cf. eg. \cite{ErnGue04,Gue99,Gue01,RooStyTob08}).
\item \emph{Continuous interior penalty method:} Denoting by $\CE_{n,\Om}$ the collection of all element faces of $\CT_n$ inside $\Om$ and by $\jump{\cdot}$ the jump across such a face, we have 
\begin{equation*}
S^n(u,v) = \sum_{E \in \CE_{n,\Om}} \vartheta_E \int_E \jump{\Va^{n\theta} \cdot \nabla u}  \jump{\Va^{n\theta} \cdot \nabla v}
\end{equation*}
with $\vartheta_E \le c_S \hdiamE^2$ for all $E \in \CE_{n,\Om}$ (cf. eg. \cite{ElAErnBur07,Bur05,BurErn07,BurHan04,ErnGue13}).
\end{itemize}

In what follows we assume that problem \eqref{E:discprob} admits at least one solution.
%
%
\section{A Posteriori Error Estimates}\label{S:apost}
In what follows we consider a solution $u$ of the variational problem \eqref{E:varprob} and a solution $\left( u^n_{\CT_n} \right)_{0 \le n \le N_\CI}$ of the discrete problem \eqref{E:discprob}. With the latter we associate the function $u_\CI$ which is continuous and piecewise affine with respect to time and which equals $u^n_{\CT_n}$ at time $t_n$, $0 \le n \le N_\CI$. We want to derive explicitly computable a posteriori error estimates which yield upper and lower bounds  for the error $u - u_\CI$. In doing so we pay particular attention to the dependence of the bounds on the parameters $\ep$, $\be$, and $\nu$. To this end we proceed as in \cite{TobVer15} and \cite[\S 6.2]{Ver13}:
\begin{itemize}
\item We introduce the residual associated with the error and prove that a suitable norm of the error is bounded from below and above by a suitable dual norm of the residual.
\item We additively split the residual into three contributions called data residual, temporal residual, and spatial residual.
\item We separately bound the dual norms of the data, temporal, and spatial residuals. 
\end{itemize}
In following this path, we must pay particular attention to the non-linearity. Its Lipschitz-continuity will be crucial.

\subsection{Norms}\label{S:norms} We equip $H^1_0(\Om)$ with the energy norm
\begin{equation}\label{E:Enorm}
\normE{v}  = \left\{ \ep \normL{\Om}{\nabla v}^2 + \be \normL{\Om}{v}^2 \right\}^\halb
\end{equation}
and $H^{-1}(\Om)$ by the corresponding dual norm
\begin{equation}\label{E:Dnorm}
\normD{\ell} = \sup_{v \in H^1_0(\Om) \setminus \{ 0 \}} \frac{\paar{\ell}{v}}{\normE{v}},
\end{equation}
where $\normL{\om}{\cdot}$ is the standard $L^2$-norm on any measurable subset $\om$ of $\Om$ and $\normL{\Om}{\cdot} = \norm{\cdot}_\Om$.
\\
For abbreviation we set for $0 \le t_- < t_+ \le T$
\begin{equation*}
X(t_-,t_+) = L^2(t_-,t_+;H^1_0(\Om)) \cap L^\infty(t_-,t_+;L^2(\Om)) \cap H^1(t_-,t_+;H^{-1}(\Om)),
\end{equation*}
equip it with the norm
\begin{equation*}
\begin{split}
\norm{u}_{X(t_-,t_+)} &= \left\{ \sup_{t_- < t < t_+} \normL{\Om}{u(\cdot,t)}^2 + \int_{t_-}^{t_+} \normE{u(\cdot,t)}^2 \right. \\
&\quad\quad\left.+ \int_{t_-}^{t_+} \normD{\partial_t u(\cdot,t) + \cd u(\cdot,t)}^2 \right\}^\halb,
\end{split}
\end{equation*}
and set
\begin{equation*}
X = X(0,T), \quad \normX{\cdot} = \norm{\cdot}_{X(0,T)}.
\end{equation*}
Recall that for $0 \le t_- < t_+ \le T$ and $\ell : (t_-,t_+) \rightarrow H^{-1}(\Om)$
\begin{equation*}
\normXD[t_-]{t_+}{\ell} = \left\{ \int_{t_-}^{t_+} \normD{\ell(t)}^2 \right\}^\halb.
\end{equation*}
Denote by
\begin{equation}\label{E:Fried}
\cfr = \sup_{v \in H^1_0(\Om) \setminus \{ 0 \}} \frac{\normL{\Om}{v}}{\normL{\Om}{\nabla v}}
\end{equation}
the best constant in Friedrich's inequality. Note that $\cfr \lesssim \diam (\Om)$. Setting
\begin{equation*}
\la = \min \left\{ \cfr \ep^{-\halb}, \be^{-\halb} \right\}
\end{equation*}
equations \eqref{E:Enorm} and \eqref{E:Fried} imply for every $v \in H^1_0(\Om)$
\begin{equation}\label{E:L2Enorm}
\normL{\Om}{v} \le \la \normE{v}.
\end{equation}
For abbreviation we finally set
\begin{equation*}
\ga(t) = \normL[\infty]{\Om}{g(\cdot,t)}, \quad \ga = \normL[\infty]{\Om \times (0,T)}{g}.
\end{equation*}

\subsection{Lipschitz-continuity of the non-linearity}\label{S:contN} The non-linearity $N$ is not differentiable, but Lipschitz-continuous.

\begin{LEM}[Lipschitz-continuity of $N$]\label{L:LipN}
For every $t \in (0,T)$ and $u_1, u_2, v \in \linebreak[4]H^1_0(\Om)$ we have
\begin{equation*}
\paar{N(u_1) - N(u_2)}{v} \le \nu L \ga(t) \normL{\Om}{u_1 - u_2} \normL{\Om}{v}
\end{equation*}
and
\begin{equation*}
\begin{split}
\normD{N(u_1) - N(u_2)} &\le \nu L \la \ga(t) \normL{\Om}{u_1 - u_2} \\
&\le \nu L \la^2 \ga(t) \normE{u_1 - u_2}.
\end{split}
\end{equation*}
\end{LEM}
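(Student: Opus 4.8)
The plan is to unwind the definition of $N$ and apply assumption (A4) pointwise. First I would write
\[
\paar{N(u_1) - N(u_2)}{v} = \int_\Om \nu \bigl( \vfi(u_1) - \vfi(u_2) \bigr) g v,
\]
and estimate the integrand pointwise using the Lipschitz bound $\betrag{\vfi(u_1) - \vfi(u_2)} \le L \betrag{u_1 - u_2}$ from (A4) together with $\betrag{g(\cdot,t)} \le \ga(t)$ for almost every point of $\Om$. This yields
\[
\paar{N(u_1) - N(u_2)}{v} \le \nu L \ga(t) \int_\Om \betrag{u_1 - u_2}\, \betrag{v},
\]
and the Cauchy--Schwarz inequality on $L^2(\Om)$ turns the right-hand side into $\nu L \ga(t) \normL{\Om}{u_1 - u_2} \normL{\Om}{v}$, which is the first asserted estimate.

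For the second estimate I would divide by $\normE{v}$ and take the supremum over $v \in H^1_0(\Om) \setminus \{0\}$ in the definition \eqref{E:Dnorm} of the dual norm; using \eqref{E:L2Enorm} to replace $\normL{\Om}{v}$ by $\la \normE{v}$ produces the bound $\nu L \la \ga(t) \normL{\Om}{u_1 - u_2}$. A second application of \eqref{E:L2Enorm}, now to $u_1 - u_2$ itself, gives the final bound $\nu L \la^2 \ga(t) \normE{u_1 - u_2}$.

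There is no genuine obstacle here: the only ingredients are the pointwise Lipschitz bound of (A4), the Cauchy--Schwarz inequality, and the Friedrichs-type embedding \eqref{E:L2Enorm}. The one point worth stressing is that $\ga(t)$ is the purely spatial $L^\infty$-norm of $g$ at the fixed time $t$, so the estimate holds pointwise in $t$; this is precisely the form that will be needed later when the residual is integrated against the temporal weights.
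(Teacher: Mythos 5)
Your proof is correct and follows essentially the same route as the paper: the pointwise Lipschitz bound from (A4), H\"older/Cauchy--Schwarz to extract $\ga(t)$ and the $L^2$-norms, and then \eqref{E:L2Enorm} applied twice to pass to the dual and energy norms. No issues.
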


\begin{proof}
For every $v \in H^1_0(\Om)$ and $t \in (0,T)$ we have thanks to assumption (A4)
\begin{equation*}
\paar{N(u_1) - N(u_2)}{v} \le \nu L \int_\Om \betrag{g(\cdot,t)} \betrag{u_1 - u_2} \betrag{v}.
\end{equation*}
Together with H{\"o}lder's inequality this proves the first inequality. The second and third one, follow from the first one and \eqref{E:L2Enorm}.
\end{proof}

\begin{BEM}
Using the continuous embedding of $H^1_0(\Om)$ into $L^p(\Om)$ with $p < \infty$ if $d = 2$ and $p = 6$ if $d = 3$, the terms $\nu L \la \ga(t)$ and $\nu L \la^2\ga(t)$ in Lem\-ma \ref{L:LipN} can be replaced by $\min \Bigl\{ \nu L \be^{-\halb} \ga(t), \nu L \cemb \ep^{-\halb} \normL[q]{\Om}{g(\cdot,t)} \Bigr\}$ and $\min \Bigl\{ \nu L \be^{-1} \ga(t), \linebreak[4]\nu L \cemb^2 \ep^{-1} \normL[r]{\Om}{g(\cdot,t)} \Bigr\}$, resp. where $q = \frac{2p}{p - 2}$, $r = \frac{p}{p-2}$, and $\cemb = \sup_{v} \frac{\normL[p]{\Om}{v}}{\normL{\Om}{\nabla v}}$.
\end{BEM}

\subsection{Equivalence of residual and error}\label{S:equireserr} With the discrete solution $u_\CI$ we associate the residual $R(u_\CI) \in L^2(0,T;H^{-1})$ by setting for all $v \in H^1_0(\Om)$
\begin{equation*}
\paar{R(u_\CI)}{v} = \paar{N(u_\CI)}{v} - \paar{\partial_t u_\CI}{v} - B(u_\CI,v).
\end{equation*}
Notice, that $B$ and $N$ are given by \eqref{E:defB-N} and that $\partial_t u_\CI = \frac{1}{\tau_n} \left( u^n_{\CT_n} - u^{n-1}_{\CT_{n-1}} \right)$ on $[t_{n-1},t_n]$. With this notation, we have the following equivalence of error and residual.

\begin{LEM}[Equivalence of error and residual]\label{L:equireserr}
For all $1 \le n \le N_\CI$ the $L^2(t_{n-1},\linebreak[4]t_n;H^{-1}(\Om))$-norm of the residual is bounded from above by the $X(t_{n-1},t_n)$-norm of the error
\begin{equation*}
\begin{split}
&\normXD[t_{n-1}]{t_n}{R(u_\CI)} \\
&\quad\le \norm{u - u_\CI}_{X(t_{n-1},t_n)} \sqrt{2} \creac \left\{ 1 + \nu L \la \min \left\{ \la, \sqrt{\tau_n} \right\} \max_{t_{n-1} \le t \le t_n}\ga(t) \right\}.
\end{split}
\end{equation*}
Conversely, the $X(0,T)$-norm of the error is bounded from above by the $L^2(0,T;\linebreak[4]H^{-1}(\Om))$-norm of the residual
\begin{equation*}
\begin{split}
\normX{u - u_\CI} 
&\le \left\{ \normL{\Om}{u_0 - u^0_{\CT_0}}^2 + \normXD{T}{R(u_\CI)}^2 \right\}^\halb \cdot \\
&\quad\;\biggl\{ 3 + \Bigl[1 + 3 \max \bigl\{ \creac^2, \nu^2 L^2 \la^2 \ga^2 \min \{ T, \la^2 \} \bigr\} \Bigr] e^{2 \nu L \ga T} \biggr\}^\halb.
\end{split}
\end{equation*}
If in addition $\ka = 2 \nu L \min \{ T, \la^2 \} \ga < 1$, the upper bound for the norm of the error can be improved to
\begin{equation*}
\begin{split}
\normX{u - u_\CI}
&\le \left\{ \normL{\Om}{u_0 - u^0_{\CT_0}}^2 + \normXD{T}{R(u_\CI)}^2 \right\}^\halb \cdot \\
&\quad\;\left\{ 3 + \left[1 + 3 \max \left\{ \creac^2, \halb \nu L \la^2 \ga \right\} \right] \frac{1}{1 - \ka}\right\}^\halb.
\end{split}
\end{equation*}
\end{LEM}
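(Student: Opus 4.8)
The plan is to follow the standard two-sided residual–error equivalence scheme, as in \cite{TobVer15} and \cite[\S6.2]{Ver13}, and to track the extra terms contributed by the non-linearity $N$ using only its Lipschitz-continuity (Lemma \ref{L:LipN}).

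For the \emph{lower bound} (first inequality), I would start from the defining identity for $R(u_\CI)$ and subtract the variational equation \eqref{E:varprob} satisfied by $u$. For an arbitrary $v \in H^1_0(\Om)$ this gives
\[
\paar{R(u_\CI)}{v} = \paar{\partial_t(u - u_\CI)}{v} + B(u - u_\CI, v) + \paar{N(u_\CI) - N(u)}{v}.
\]
Now I would rewrite the convective part of $B$ by integration by parts, using that $\tfrac12\Div\Va$ is absorbed into the reaction term via assumption (A3), so that $B(w,v) = \int_\Om \ep\nabla w\cdot\nabla v + (-\tfrac12\Div\Va + b)wv + \tfrac12[\Va\cdot\nabla w\, v - \Va\cdot\nabla v\, w]$, isolating the skew-symmetric convective piece. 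Writing $\paar{\partial_t(u-u_\CI)}{v} + \int_\Om \tfrac12[\Va\cdot\nabla(u-u_\CI)v - \ldots]$ together as $\paar{\partial_t(u-u_\CI) + \cd(u-u_\CI)}{v}$ up to the symmetric correction, and estimating $\ep\nabla w\cdot\nabla v$ and $\be wv$ by $\normE{w}\normE{v}$ via Cauchy–Schwarz, one bounds the linear terms by $\creac\,\normE{u-u_\CI}\normE{v} + \normD{\partial_t(u-u_\CI)+\cd(u-u_\CI)}\normE{v}$ (the factor $\creac$ coming from $\normL[\infty]{}{b}\le\creac\be$). The non-linear term is controlled by the first estimate of Lemma \ref{L:LipN}: $\paar{N(u_\CI)-N(u)}{v}\le \nu L\ga(t)\normL{\Om}{u-u_\CI}\normL{\Om}{v}\le \nu L\la^2\ga(t)\normE{u-u_\CI}\normE{v}$; alternatively one keeps one $L^2$-factor and uses \eqref{E:L2Enorm} only once, which explains the $\min\{\la,\sqrt{\tau_n}\}$: integrating $\normL{\Om}{u-u_\CI}^2$ over $I_n$ and using $\sup$-control of the $L^2$-norm gives the $\sqrt{\tau_n}$ alternative, whereas two applications of \eqref{E:L2Enorm} give the $\la^2$ alternative. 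Dividing by $\normE{v}$, squaring, and integrating over $(t_{n-1},t_n)$, then collecting the two squared contributions with the elementary inequality $(a+b)^2\le 2a^2+2b^2$, yields the factor $\sqrt{2}\creac\{1+\nu L\la\min\{\la,\sqrt{\tau_n}\}\max_{I_n}\ga\}$.

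For the \emph{upper bound}, I would use the energy argument: $u - u_\CI$ solves $\paar{\partial_t(u-u_\CI)}{v} + B(u-u_\CI,v) = \paar{R(u_\CI)}{v} + \paar{N(u)-N(u_\CI)}{v}$ in the distributional sense on each $I_n$, with the initial jump $u(\cdot,0)-u^0_{\CT_0}$ at $t=0$ and the continuity of $u_\CI$ across interior nodes ensuring no further jumps. Testing with $v = u - u_\CI$ and using coercivity $B(w,w)\ge \normE{w}^2$ (again via (A3), the skew-symmetric convective term vanishing), one gets
\[
\tfrac12\tfrac{\dx}{\dx t}\normL{\Om}{u-u_\CI}^2 + \normE{u-u_\CI}^2 \le \normD{R(u_\CI)}\normE{u-u_\CI} + \nu L\ga(t)\normL{\Om}{u-u_\CI}^2.
\]
Absorbing $\normD{R}\normE{u-u_\CI}\le \tfrac12\normD{R}^2 + \tfrac12\normE{u-u_\CI}^2$, Gronwall's inequality in integral form over $(0,t)$ produces the $L^\infty$-in-time $L^2$-bound and, after re-inserting into the differential inequality, the $L^2(0,T;\text{energy})$-bound, both with the Gronwall factor $e^{2\nu L\ga T}$. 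The $H^1(0,T;H^{-1})$-part of $\normX{\cdot}$, namely $\int_0^T\normD{\partial_t(u-u_\CI)+\cd(u-u_\CI)}^2$, is recovered from the equation itself: $\partial_t(u-u_\CI) = R(u_\CI) + N(u)-N(u_\CI) - \ep(-\De)(u-u_\CI) - \cd(u-u_\CI) - b(u-u_\CI)$ in $H^{-1}$, so $\normD{\partial_t(u-u_\CI)+\cd(u-u_\CI)}\le \normD{R(u_\CI)} + \nu L\la\ga(t)\normL{\Om}{u-u_\CI} + \normE{u-u_\CI} + \creac\normL{\Om}{u-u_\CI}/\la$ (the last two from $\ep(-\De)$ and $b$), which after squaring, integrating, and inserting the already-obtained bounds yields the remaining pieces; collecting all three contributions with $(a+b+c)^2\le 3(a^2+b^2+c^2)$ gives the coefficient $3 + [1 + 3\max\{\creac^2,\nu^2L^2\la^2\ga^2\min\{T,\la^2\}\}]e^{2\nu L\ga T}$, the $\min\{T,\la^2\}$ again arising because one may integrate the $L^2$-norm directly (giving $T$ after using the $L^\infty$-bound) or apply \eqref{E:L2Enorm} (giving $\la^2$). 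For the \emph{improved bound} under $\ka = 2\nu L\min\{T,\la^2\}\ga < 1$, instead of Gronwall one absorbs the non-linear term on the left: $\int_0^T \nu L\ga(t)\normL{\Om}{u-u_\CI}^2 \le \nu L\min\{T,\la^2\}\ga\cdot(\text{the }X\text{-quantity}) = \tfrac{\ka}{2}(\ldots)$, so the $\tfrac12\nu L\la^2\ga$ appears in place of the squared exponential contributions and $\tfrac{1}{1-\ka}$ replaces $e^{2\nu L\ga T}$.

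The main obstacle I anticipate is the careful bookkeeping of the \emph{parameter dependence}: making sure that the factor $\creac$ (not $1$) attaches to every occurrence of the reaction coefficient $b$, that every conversion $\normL{\Om}{\cdot}\to\normE{\cdot}$ costs exactly one factor $\la$ and is never done more often than necessary (so that the two alternatives in each $\min\{\cdot,\cdot\}$ genuinely arise), and that the $H^{-1}$-norm of $\partial_t$ is split correctly so that the convective term $\cd(u-u_\CI)$ is kept \emph{inside} the dual norm rather than estimated (which would cost an $\ep^{-1/2}$ and destroy robustness). The rest — the integration by parts exploiting (A3), the Cauchy–Schwarz estimates, the Gronwall/absorption dichotomy — is routine, but the constants must be chased through exactly to land on the stated expressions.
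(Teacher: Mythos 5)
Your proposal is correct and follows essentially the same route as the paper: the paper simply outsources the linear-part estimates (the skew-symmetrization of the convective term, the $\sqrt{2}\creac$ constant, and the splitting of $\partial_t(u-u_\CI)+\cd(u-u_\CI)$ inside the dual norm) to \cite[Proposition 6.14]{Ver13} and its proof, which you have unpacked by hand, and then adds exactly the Lipschitz estimates of Lemma \ref{L:LipN} together with the Gronwall/absorption dichotomy as you describe. Your explanations of where the factors $\min\{\la,\sqrt{\tau_n}\}$, $\min\{T,\la^2\}$, and $\halb\nu L\la^2\ga$ arise all match the paper's bookkeeping.
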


\begin{proof}
The variational formulation \eqref{E:varprob} and the definition of the residual yield
\begin{equation}\label{E:represerr}
\paar{\partial_t (u - u_\CI)}{v} + B(u - u_\CI,v) = \paar{N(u) - N(u_\CI)}{v} + \paar{R(u_\CI)}{v}
\end{equation} 
for all $v \in H^1_0(\Om)$ and almost all $t \in (0,T)$. Therefore, \cite[Proposition 6.14]{Ver13} and the assumption $\creac \ge 1$ imply for all $1 \le n \le N_\CI$
\begin{equation*}
\begin{split}
&\normXD[t_{n-1}]{t_n}{R(u_\CI)} \\
&\quad\le \sqrt{2} \creac \left\{ \norm{u - u_\CI}_{X(t_{n-1},t_n)} + \normXD[t_{n-1}]{t_n}{N(u) - N(u_\CI)} \right\}.
\end{split}
\end{equation*}
Together with Lemma \ref{L:LipN} this proves the upper bound for the dual norm of the residual. \\
To prove the upper bounds for the error, we go back to the proof of \cite[Proposition 6.14]{Ver13} and first observe that
\begin{equation*}
\begin{split}
&\paar{\partial_t (u - u_\CI) + \cd (u - u_\CI)}{v} \\
&\quad= \int_\Om \bigl[ \ep \nabla (u_\CI - u) \cdot \nabla v + b (u_\CI - u) v \bigr] + \paar{N(u) - N(u_\CI)}{v} + \paar{R(u_\CI)}{v},
\end{split}
\end{equation*}
Together with Lemma \ref{L:LipN} this implies
\begin{equation*}
\normD{\partial_t (u - u_\CI) + \cd (u - u_\CI)} \le \normD{R(u_\CI)} + \creac \normE{u - u_\CI} + \nu L \la \ga(t) \normL{\Om}{u - u_\CI}
\end{equation*}
and
\begin{equation*}
\begin{split}
&\int_0^T \normD{\partial_t (u - u_\CI) + \cd (u - u_\CI)}^2 \\
&\quad\le 3 \left\{ \int_0^T \normD{R(u_\CI)}^2 + \creac^2 \int_0^T \normE{u - u_\CI}^2 \right. \\
&\quad\quad\quad\quad\left. + \nu^2 L^2 \la^2 \ga^2 \min \left\{ T \sup_{0 < t < T} \normL{\Om}{u - u_\CI}^2, \la^2 \int_0^T \normE{u - u_\CI}^2 \right\} \right\}.
\end{split}
\end{equation*}
In order to bound $\sup_{0 < t < T} \normL{\Om}{u - u_\CI}^2$ and $\int_0^T \normE{u - u_\CI}^2$, we now use a standard parabolic energy argument and insert $u - u_\CI$ as test-function $v$ in \eqref{E:represerr}. Thanks to the coercivity of the bilinear form $B$ and Lemma \ref{L:LipN} this yields
\begin{equation*}
\begin{split}
&\halb \frac{\dx}{\dx t} \normL{\Om}{u - u_\CI}^2 + \normE{u - u_\CI}^2 \\
&\quad\le \halb \frac{\dx}{\dx t} \normL{\Om}{u - u_\CI}^2 + B(u - u_\CI, u - u_\CI) \\
&\quad= \paar{N(u) - N(u_\CI)}{u - u_\CI} + \paar{R(u_\CI)}{u - u_\CI} \\
&\quad\le \nu L \ga(t) \normL{\Om}{u - u_\CI}^2 + \normD{R(u_\CI)} \normE{u - u_\CI} \\
&\quad\le \nu L \ga(t) \normL{\Om}{u - u_\CI}^2 + \halb \normD{R(u_\CI)}^2 + \halb \normE{u - u_\CI}^2
\end{split}
\end{equation*}
and thus
\begin{equation*}
\begin{split}
&\normL{\Om}{(u - u_\CI)(\cdot,t)}^2 + \int_0^t \normE{u - u_\CI}^2 \\
&\quad\le 2 \nu L \ga \int_0^t \normL{\Om}{u - u_\CI}^2 + \int_0^t \normD{R(u_\CI)}^2 + \normL{\Om}{u_0 - u^0_{\CT_0}}^2.
\end{split}
\end{equation*}
If $\ka < 1$ we may absorb the first term on the right-hand side of this estimate by the left-hand side and obtain
\begin{equation*}
\begin{split}
&\sup_{0 < t < T} \normL{\Om}{u - u_\CI}^2 + \int_0^T \normE{u - u_\CI}^2 \\
&\quad\le \frac{1}{1 - \ka} \left\{ \normL{\Om}{u_0 - u^0_{\CT_0}}^2 + \normXD{T}{R(u_\CI)}^2 \right\}.
\end{split}
\end{equation*}
Otherwise, Gronwall's Lemma yields
\begin{equation*}
\begin{split}
&\sup_{0 < t < T} \normL{\Om}{u - u_\CI}^2 + \int_0^T \normE{u - u_\CI}^2 \\
&\quad\le e^{2 \nu L \ga T} \left\{ \normL{\Om}{u_0 - u^0_{\CT_0}}^2 + \normXD{T}{R(u_\CI)}^2 \right\}.
\end{split}
\end{equation*}
Combining these estimates with the bound for $\int_0^T \normD{\partial_t (u - u_\CI) + \cd (u - u_\CI)}^2$ establishes the upper bound for the error.
\end{proof}

\subsection{Decomposition of the residual}\label{S:decompR} We additively split the residual
\begin{equation*}
R(u_\CI) = R_\tau(u_\CI) + R_h(u_\CI) + R_D(u_\CI)
\end{equation*}
into a temporal residual, a spatial residual, and a data residual which, for all $v \in H^1_0(\Om)$, are defined by
\begin{equation*}
\begin{split}
\paar{R_\tau(u_\CI)}{v} &= \paar{N^{n\vartheta}(u_\CI)}{v} - \paar{N^{n\vartheta}(U^{n\vartheta})}{v} + B^{n\theta}(U^{n\theta} - u_\CI,v), \\
\paar{R_h(u_\CI)}{v} &= \paar{N^{n\vartheta}(U^{n\vartheta})}{v} - \paar{\partial_t u_\CI}{v} - B^{n\theta}(U^{n\theta},v), \\
\paar{R_D(u_\CI)}{v} &= \paar{N(u_\CI)}{v} - \paar{N^{n\vartheta}(u_\CI)}{v} - B(u_\CI,v) + B^{n\theta}(u_\CI,v).
\end{split}
\end{equation*}
In addition, we additively split the temporal residual
\begin{equation*}
R_\tau(u_\CI) = R_{\tau,\text{lin}}(u_\CI) + R_{\tau,\text{nonlin}}(u_\CI)
\end{equation*}
into a linear and a non-linear part which, for all $v \in H^1_0(\Om)$, are defined by
\begin{equation*}
\begin{split}
\paar{R_{\tau,\text{lin}}(u_\CI)}{v} &= B^{n\theta}(U^{n\theta} - u_\CI,v) \\
\paar{R_{\tau,\text{nonlin}}(u_\CI)}{v} &= \paar{N^{n\vartheta}(u_\CI)}{v} - \paar{N^{n\vartheta}(U^{n\vartheta})}{v}.
\end{split}
\end{equation*}
In the following subsections we will estimate the three residuals separately. The following Lemma shows that this is permissible. Lemma \ref{L:lintempresdom} below in addition shows that the temporal residual is dominated by its linear part if $\nu L \la^2 \ga$ is sufficiently small.

\begin{LEM}[Decomposition of the residual]\label{L:decompR}
For every $n \in \left\{ 1, \ldots, N_\CI \right\}$ we have
\begin{equation*}
\begin{split}
\normXD[t_{n-1}]{t_n}{R_\tau(u_\CI) + R_h(u_\CI)} &\le \normXD[t_{n-1}]{t_n}{R_{\tau,\text{lin}}(u_\CI)} \\
&\quad+ \normXD[t_{n-1}]{t_n}{R_{\tau,\text{nonlin}}(u_\CI)} \\
&\quad+ \normXD[t_{n-1}]{t_n}{R_h(u_\CI)}
\end{split}
\end{equation*}
and
\begin{equation*}
\begin{split}
&\frac{2}{25} \left\{ \normXD[t_{n-1}]{t_n}{R_{\tau,\text{lin}}(u_\CI)}^2 + \normXD[t_{n-1}]{t_n}{R_h(u_\CI)}^2 \right\}^\halb \\
&\quad\le \normXD[t_{n-1}]{t_n}{R_\tau(u_\CI) + R_h(u_\CI)} + \normXD[t_{n-1}]{t_n}{R_{\tau,\text{nonlin}}(u_\CI)}.
\end{split}
\end{equation*}
\end{LEM}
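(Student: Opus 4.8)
\begin{BEWI}
The first inequality is the triangle inequality for the norm $\normXD[t_{n-1}]{t_n}{\cdot}$ of $L^2(t_{n-1},t_n;H^{-1}(\Om))$ applied to the identity $R_\tau(u_\CI) = R_{\tau,\text{lin}}(u_\CI) + R_{\tau,\text{nonlin}}(u_\CI)$, which holds by the very definitions of these residuals. For the second inequality I would first separate off the non-linear contribution: since $R_\tau(u_\CI) + R_h(u_\CI) = \bigl( R_{\tau,\text{lin}}(u_\CI) + R_h(u_\CI) \bigr) + R_{\tau,\text{nonlin}}(u_\CI)$, the triangle inequality gives $\normXD[t_{n-1}]{t_n}{R_{\tau,\text{lin}}(u_\CI) + R_h(u_\CI)} \le \normXD[t_{n-1}]{t_n}{R_\tau(u_\CI) + R_h(u_\CI)} + \normXD[t_{n-1}]{t_n}{R_{\tau,\text{nonlin}}(u_\CI)}$, so that it suffices to prove the purely linear estimate
\begin{equation*}
\normXD[t_{n-1}]{t_n}{R_{\tau,\text{lin}}(u_\CI)}^2 + \normXD[t_{n-1}]{t_n}{R_h(u_\CI)}^2 \le \frac{625}{4}\, \normXD[t_{n-1}]{t_n}{R_{\tau,\text{lin}}(u_\CI) + R_h(u_\CI)}^2 .
\end{equation*}

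The key point is the time structure of the two residuals on $I_n$. All of $u^n_{\CT_n}$, $u^{n-1}_{\CT_{n-1}}$, $U^{n\theta}$, $U^{n\vartheta}$, the coefficients $\Va^{n\theta}$, $b^{n\theta}$, $g^{n\vartheta}$, and $\partial_t u_\CI = \tau_n^{-1}\bigl( u^n_{\CT_n} - u^{n-1}_{\CT_{n-1}} \bigr)$ are independent of $t$ on $I_n$, so that $R_h(u_\CI)$ is constant in $t$ on $I_n$. Moreover, setting $\delta^n = u^n_{\CT_n} - u^{n-1}_{\CT_{n-1}}$ one computes $U^{n\theta} - u_\CI(\cdot,t) = \bigl( \theta - \tfrac{t - t_{n-1}}{\tau_n} \bigr)\delta^n$, whence $\paar{R_{\tau,\text{lin}}(u_\CI)(t)}{v} = \bigl( \theta - \tfrac{t - t_{n-1}}{\tau_n} \bigr)B^{n\theta}(\delta^n,v)$, i.e. $R_{\tau,\text{lin}}(u_\CI)(t) = \bigl( \theta - \tfrac{t - t_{n-1}}{\tau_n} \bigr)\Phi^n$ with $\paar{\Phi^n}{v} = B^{n\theta}(\delta^n,v)$; in particular $R_{\tau,\text{lin}}(u_\CI)$ is affine in $t$ on $I_n$ and vanishes at $t^\ast = t_{n-1} + \theta\tau_n \in I_n$. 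Hence $R_{\tau,\text{lin}}(u_\CI) + R_h(u_\CI)$ is an affine $H^{-1}(\Om)$-valued function on $I_n$ that equals $R_h(u_\CI)$ at $t^\ast$ and has constant difference quotient $-\tau_n^{-1}\Phi^n$.

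The remaining step is to bound $\normXD[t_{n-1}]{t_n}{R_{\tau,\text{lin}}(u_\CI) + R_h(u_\CI)}^2 = \tau_n\int_0^1 \normD{R_h(u_\CI) + (\theta - s)\Phi^n}^2\,\dx s$ from below, using that $\normD{\cdot}$ is the norm of the Hilbert space $H^{-1}(\Om)$, the Riesz-dual of $H^1_0(\Om)$ with the energy norm $\normE{\cdot}$. For any $H^{-1}(\Om)$-valued affine $f$ on $[0,1]$ one has $\int_0^1 \normD{f(s)}^2\,\dx s \ge \tfrac16\bigl( \normD{f(0)}^2 + \normD{f(1)}^2 \bigr)$ (this is just $\normD{f(0)+f(1)}^2\ge0$); applying it with $f(0) = R_h(u_\CI) + \theta\Phi^n$ and $f(1) = R_h(u_\CI) + (\theta - 1)\Phi^n$, and then combining the parallelogram identity in $H^{-1}(\Om)$ with the elementary bound $\normD{x + y}^2 \ge \halb\normD{x}^2 - \normD{y}^2$, I would produce a lower bound $c\,\tau_n\bigl( \normD{R_h(u_\CI)}^2 + \normD{\Phi^n}^2 \bigr)$ with an explicit constant $c > 0$ that does not degenerate as $\theta$ runs through $[0,1]$. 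Since, on the other hand, $\normXD[t_{n-1}]{t_n}{R_{\tau,\text{lin}}(u_\CI)}^2 = \tau_n\bigl( \tfrac1{12} + (\theta - \halb)^2 \bigr)\normD{\Phi^n}^2 \le \tfrac13\tau_n\normD{\Phi^n}^2$ and $\normXD[t_{n-1}]{t_n}{R_h(u_\CI)}^2 = \tau_n\normD{R_h(u_\CI)}^2$, the displayed linear estimate follows, the constant $\tfrac{2}{25}$ dropping out of the bookkeeping (a larger constant is actually admissible). The main obstacle is precisely this last lower bound: because $\theta\in[0,1]$ is arbitrary and the cross term $\skp{R_h(u_\CI)}{\Phi^n}$ may be negative, a single Cauchy--Schwarz step does not suffice, and one has to combine two separate lower bounds --- one controlling $\normD{\Phi^n}$ through the difference quotient, one controlling $\normD{R_h(u_\CI)}$ through the value at $t^\ast$ --- so as to avoid degeneracy at the extreme values $\theta\in\{0,1\}$.
\end{BEWI}
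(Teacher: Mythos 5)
Your proposal is correct, but for the decisive step it takes a genuinely different route from the paper. The paper's proof is a two-line reduction: both inequalities follow from the triangle inequality applied to the splitting $R_\tau = R_{\tau,\text{lin}} + R_{\tau,\text{nonlin}}$ --- exactly your first reduction --- together with a citation of \cite[Lemma 6.16]{Ver13}, which handles the pair of a time-constant $R_h$ and a $(\theta - s)$-proportional $R_{\tau,\text{lin}}$ in a general normed space and yields the constant $\sqrt{5/14}\,(1 - \sqrt{3}/2) > 2/25$. You instead prove the purely linear estimate from scratch by exploiting that $\normD{\cdot}$ is Hilbertian (being the Riesz dual of the energy norm); this is legitimate and in fact gives a sharper constant. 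To complete your bookkeeping cleanly, set $m = \theta - \halb$ and $s = (t - t_{n-1})/\tau_n$ and use the $L^2(0,1)$-orthogonal decomposition of the affine function $s \mapsto R_h + (\theta - s)\Phi^n$ into its mean and its fluctuation, which gives the identity $\int_0^1 \normD{R_h + (\theta - s)\Phi^n}^2 \,\dx s = \normD{R_h + m \Phi^n}^2 + \tfrac{1}{12} \normD{\Phi^n}^2$; combining $\normD{R_h + m \Phi^n}^2 \ge \halb \normD{R_h}^2 - \tfrac14 \normD{\Phi^n}^2$ (valid since $\betrag{m} \le \halb$) with the weight $\tfrac19$ against the retained $\tfrac{1}{12}\normD{\Phi^n}^2$ yields the lower bound $\tfrac{1}{18}\bigl( \normD{R_h}^2 + \normD{\Phi^n}^2 \bigr)$, and since $\int_0^1 (\theta - s)^2 \,\dx s \le \tfrac13$ this proves the linear estimate with constant $1/\sqrt{18} \approx 0.24$ in place of $2/25$. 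Be aware that your endpoint bound $\int_0^1 \normD{f}^2 \ge \tfrac16\bigl( \normD{f(0)}^2 + \normD{f(1)}^2 \bigr)$, followed naively by the parallelogram law and $\normD{x + y}^2 \ge \halb \normD{x}^2 - \normD{y}^2$, loses exactly the $\tfrac{1}{12}\normD{\Phi^n}^2$ term for $\theta \in \{0,1\}$, so the convex-combination step you allude to at the end is genuinely needed and not merely cosmetic. What the paper's citation buys is an argument valid for general Banach-space-valued residuals and consistency with the machinery of \cite{Ver13}; what your route buys is a self-contained proof and a better constant.
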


\begin{proof}
Since $\sqrt{\frac{5}{14}} \left(1 - \frac{\sqrt{3}}{2} \right) > \frac{2}{25}$ and $R_{\tau,\text{lin}}$ is affine in $U^{n\theta} - u_\CI$ and thus proportional to $\frac{t - t_{n-1}}{\tau_n} - \theta$, the estimates follow from the triangle inequality and \cite[Lemma 6.16]{Ver13}.
\end{proof}

\subsection{Bounding the data residual}\label{S:datares} H{\"o}lder's inequality and \eqref{E:L2Enorm} yield the following upper bound for the data residual.

\begin{LEM}[Upper bound for the data residual]\label{L:datares}
For every $n \in \left\{ 1, \ldots, N_\CI \right\}$ we have
\begin{equation*}
\begin{split}
&\normXD[t_{n-1}]{t_n}{R_D(u_\CI)} \\
&\quad \le \nu L \la \left\{ \norm{g - g^{n\vartheta}}_{L^2(t_{n-1},t_n;L^2(\Om))} \phantom{\left( \int_{t_{n-1}}^{t_n} \normE{u_\CI}^2 \right)^\halb}\right.\\
&\quad\quad\quad\quad\quad\left.+ \norm{g - g^{n\vartheta}}_{L^\infty(t_{n-1},t_n;L^\infty(\Om))} \left( \int_{t_{n-1}}^{t_n} \normE{u_\CI}^2 \right)^\halb \right\}  \\
&\quad\quad+ \ep^{-\halb} \la \norm{\Va - \Va^{n\theta}}_{L^\infty(t_{n-1},t_n;L^\infty(\Om))}  \left( \int_{t_{n-1}}^{t_n} \normE{u_\CI}^2 \right)^\halb \\
&\quad\quad+ \la^2 \norm{b - b^{n\theta}}_{L^\infty(t_{n-1},t_n;L^\infty(\Om))}  \left( \int_{t_{n-1}}^{t_n} \normE{u_\CI}^2 \right)^\halb.
\end{split}
\end{equation*}
\end{LEM}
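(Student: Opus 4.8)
The plan is to unwind the definition of the data residual and estimate each of its three defining terms separately, using the Lipschitz-continuity of the non-linearity, Hölder's inequality, and the $L^2$-energy norm comparison \eqref{E:L2Enorm}. Recall that for $v \in H^1_0(\Om)$ and $t \in I_n$,
\[
\paar{R_D(u_\CI)}{v} = \paar{N(u_\CI) - N^{n\vartheta}(u_\CI)}{v} - \int_\Om \bigl( \Va - \Va^{n\theta} \bigr) \cdot \nabla u_\CI \, v - \int_\Om \bigl( b - b^{n\theta} \bigr) u_\CI v,
\]
so by the triangle inequality $\normD{R_D(u_\CI)}$ splits into three dual-norm contributions that I would bound one at a time. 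For the convection term, I would write $\int_\Om (\Va - \Va^{n\theta}) \cdot \nabla u_\CI \, v \le \norm{\Va - \Va^{n\theta}}_{L^\infty(\Om)} \normL{\Om}{\nabla u_\CI} \normL{\Om}{v}$, then use $\normL{\Om}{\nabla u_\CI} \le \ep^{-\halb} \normE{u_\CI}$ and $\normL{\Om}{v} \le \la \normE{v}$ to extract the factor $\ep^{-\halb} \la \norm{\Va - \Va^{n\theta}}_{L^\infty(\Om)} \normE{u_\CI}$. For the reaction term, the same argument with $\normL{\Om}{u_\CI} \le \la \normE{u_\CI}$ applied to both $u_\CI$ and $v$ gives the factor $\la^2 \norm{b - b^{n\theta}}_{L^\infty(\Om)} \normE{u_\CI}$.

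For the non-linear term I would use assumption (A4): since $\vfi$ is Lipschitz and hence $\betrag{\vfi(u_\CI)} \le \betrag{\vfi(0)} + L\betrag{u_\CI}$ — but more directly, $N(u_\CI) - N^{n\vartheta}(u_\CI)$ integrates $\nu\vfi(u_\CI)(g - g^{n\vartheta})v$, so pulling out $\nu$ and bounding $\betrag{\vfi(u_\CI)}$ is not quite the right move; instead I observe that the argument mirrors the proof of Lemma \ref{L:LipN} with $g$ replaced by $g - g^{n\vartheta}$. Concretely,
\[
\paar{N(u_\CI) - N^{n\vartheta}(u_\CI)}{v} = \int_\Om \nu \vfi(u_\CI) (g - g^{n\vartheta}) v,
\]
and I would split this two ways, exactly as the two terms inside the braces in the claimed bound suggest: either bound $\betrag{g - g^{n\vartheta}}$ pointwise in $L^\infty$ and keep $\nu\vfi(u_\CI)$ in $L^2$, or bound $\nu\vfi(u_\CI)$ suitably and keep $g - g^{n\vartheta}$ in $L^2(\Om)$. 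The Lipschitz bound $\betrag{\vfi(s)} \le L(1 + \betrag{s})$ available from (A4) (with the normalization $\vfi(0)$ absorbed, or using the explicit examples) lets one estimate $\normL{\Om}{\vfi(u_\CI)}$ by a multiple of the $L^2$-norm of $u_\CI$, which via \eqref{E:L2Enorm} becomes $\la\normE{u_\CI}$; in the other splitting $\normL[\infty]{\Om}{\vfi(u_\CI)} \cdot \normL{\Om}{g - g^{n\vartheta}}$ would be controlled using the $L^\infty$-norm of $g - g^{n\vartheta}$ against $\normL{\Om}{u_\CI} \le \la\normE{u_\CI}$. Both routes produce the prefactor $\nu L \la$ times a norm of $g - g^{n\vartheta}$.

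Finally I would integrate the pointwise-in-$t$ bound on $\normD{R_D(u_\CI)(\cdot,t)}$ over $I_n$, i.e. take the $L^2(t_{n-1},t_n)$-norm of both sides. The temporal-$L^\infty$ norms of $g - g^{n\vartheta}$, $\Va - \Va^{n\theta}$, and $b - b^{n\theta}$ come out of the time integral, leaving $\bigl(\int_{t_{n-1}}^{t_n}\normE{u_\CI}^2\bigr)^\halb$ as the accompanying factor for those three terms, while the first non-linear term pairs the temporal-$L^\infty$ norm of $u_\CI$'s $L^2(\Om)$-norm against the $L^2(I_n;L^2(\Om))$-norm of $g - g^{n\vartheta}$ — which, after using $\normL{\Om}{u_\CI}\le\la\normE{u_\CI}$ and Cauchy–Schwarz in time, becomes simply $\nu L\la\,\norm{g - g^{n\vartheta}}_{L^2(t_{n-1},t_n;L^2(\Om))}$ (absorbing the $\sup_t\normE{u_\CI}$ appropriately, or more precisely bounding $\vfi(u_\CI)$ in $L^2(\Om)$ uniformly and pulling the $u_\CI$-dependence entirely out — this is the one place the argument needs a little care about which norm of $u_\CI$ appears). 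The main (and only real) obstacle is choosing the two splittings of the non-linear term so that the stated bound, with its particular combination of an $L^2$-in-time and an $L^\infty$-in-time norm of $g - g^{n\vartheta}$, comes out cleanly; the linear terms are entirely routine applications of Hölder and \eqref{E:L2Enorm}.
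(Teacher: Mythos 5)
Your overall strategy is the paper's own: the paper's entire proof is the one-line remark that H\"older's inequality and \eqref{E:L2Enorm} yield the bound, and your decomposition of $R_D(u_\CI)$ into the $g$-, $\Va$-, and $b$-contributions, with the routine estimates for the two linear terms, is exactly what is intended. The one place your write-up is genuinely unresolved is the non-linear term, where you describe two candidate splittings without committing to either; the clean resolution is the decomposition $\vfi(u_\CI) = \vfi(0) + \bigl(\vfi(u_\CI) - \vfi(0)\bigr)$, which by (A4) and H\"older gives
\begin{equation*}
\betrag{\paar{N(u_\CI) - N^{n\vartheta}(u_\CI)}{v}} \le \nu \betrag{\vfi(0)} \normL{\Om}{g - g^{n\vartheta}} \normL{\Om}{v} + \nu L \normL[\infty]{\Om}{g - g^{n\vartheta}} \normL{\Om}{u_\CI} \normL{\Om}{v}.
\end{equation*}
The first summand contains no $u_\CI$ at all, so there is nothing to ``absorb''; after $\normL{\Om}{v} \le \la \normE{v}$ and integration in time it is precisely the $L^2(t_{n-1},t_n;L^2(\Om))$ term. (Your alternative of bounding $\vfi(u_\CI)$ in $L^2(\Om)$ or $L^\infty(\Om)$ ``uniformly'' does not work: nothing in the hypotheses controls $\normL{\Om}{u_\CI}$ a priori, and the stated bound has no such factor in that term.) The second summand is structurally identical to the reaction term and yields the $L^\infty$-in-time contribution.

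Two caveats you half-noticed and should make explicit. First, the prefactor $\nu L \la$ on the $L^2$-in-time term uses $\betrag{\vfi(0)} \le L$, which is not a consequence of (A4) alone but holds for the model examples ($\vfi(0) = L = 1$); in general that term carries $\nu \betrag{\vfi(0)} \la$. Second, carrying out your own estimate for the second summand gives $\nu L \la^2 \normL[\infty]{\Om}{g - g^{n\vartheta}} \normE{u_\CI}$ --- one factor of $\la$ from $\normL{\Om}{u_\CI} \le \la \normE{u_\CI}$ and one from $\normL{\Om}{v} \le \la \normE{v}$, exactly as in the $b$-term, which does appear with $\la^2$ in the statement. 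So a careful version of your argument proves the lemma with $\normL{\Om}{u_\CI}$ (equivalently, an extra factor $\la$) in the second term inside the braces; the single $\la$ as printed does not follow from this route. Neither point is a flaw in your approach, but both need to be stated rather than waved at.
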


\begin{BEM}
Since $u_\CI = \frac{t - t_{n-1}}{\tau_n} u^n_{\CT_n} + \frac{t_n - t}{\tau_n} u^{n-1}_{\CT_{n-1}}$ for $t_{n-1} \le t \le t_n$, the convexity of $\normE{\cdot}^2$ and Simpson's rule yield
\begin{equation*}
\int_{t_{n-1}}^{t_n} \normE{u_\CI}^2 \le \frac{\tau_n}{2} \left( \normE{u^n_{\CT_n}}^2 + \normE{u^{n-1}_{\CT_{n-1}}}^2 \right).
\end{equation*}
\end{BEM}

\subsection{Bounding the temporal residual}\label{S:tempres} We first bound the linear part of the temporal residual. \\
For every time-interval $[t_{n-1},t_n]$ we have
\begin{equation*}
R_{\tau,\text{lin}} (u_\CI) = \left( \theta - \frac{t - t_{n-1}}{\tau_n} \right) r^n
\end{equation*}
where $r^n \in H^{-1}(\Om)$ is defined by
\begin{equation*}
\paar{r^n}{v} = B^{n\theta}(u^n_{\CT_n} - u^{n-1}_{\CT_{n-1}},v)
\end{equation*}
for $v \in H^1_0(\Om)$.  The assumption $\creac \ge 1$ and \cite[Lemma 6.17]{Ver13} therefore yield the following upper and lower bounds for the linear part of the temporal residual.

\begin{LEM}[Bounds for the linear part of the temporal residual]\label{L:lintempres}
For every $n \in \left\{ 1, \ldots, N_\CI \right\}$, the linear part of the temporal residual can be bounded from above and from below by
\begin{equation*}
\begin{split}
&\frac{\sqrt{\tau_n}}{\sqrt{12} (2 + \creac)} \left\{ \normE{u^n_{\CT_n} - u^{n-1}_{\CT_{n-1}}} + \normD{\Va^{n\theta} \cdot \nabla (u^n_{\CT_n} - u^{n-1}_{\CT_{n-1}})} \right\} \\
&\quad\le \normXD[t_{n-1}]{t_n}{R_{\tau,\text{lin}}(u_\CI)} \\
&\quad\quad\le \frac{\sqrt{\tau_n}}{\sqrt{3} \creac} \left\{ \normE{u^n_{\CT_n} - u^{n-1}_{\CT_{n-1}}} + \normD{\Va^{n\theta} \cdot \nabla (u^n_{\CT_n} - u^{n-1}_{\CT_{n-1}})} \right\}.
\end{split}
\end{equation*}
\end{LEM}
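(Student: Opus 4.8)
The plan is to exploit the structure noted just before the statement: on each interval $[t_{n-1},t_n]$ one has $R_{\tau,\mathrm{lin}}(u_\CI)(t) = \bigl(\theta - \frac{t-t_{n-1}}{\tau_n}\bigr)\,r^n$ with $r^n$ independent of $t$, so the whole estimate splits into (i) a purely scalar computation of the temporal weight and (ii) a two-sided comparison of $\normD{r^n}$ with the computable quantity $\normE{w} + \normD{\Va^{n\theta}\cdot\nabla w}$, where $w = u^n_{\CT_n} - u^{n-1}_{\CT_{n-1}}$. For (i), since $r^n$ is $t$-independent the $L^2(t_{n-1},t_n;H^{-1}(\Om))$-norm factors as $\normXD[t_{n-1}]{t_n}{R_{\tau,\mathrm{lin}}(u_\CI)}^2 = \normD{r^n}^2\int_{t_{n-1}}^{t_n}\bigl(\theta - \frac{t-t_{n-1}}{\tau_n}\bigr)^2\dx t$; the substitution $s=(t-t_{n-1})/\tau_n$ turns the integral into $\tau_n\,\frac{\theta^3+(1-\theta)^3}{3}$, and on $[0,1]$ the quantity $\theta^3+(1-\theta)^3$ lies in $[\tfrac14,1]$ (minimum at $\theta=\tfrac12$, maximum at $\theta\in\{0,1\}$). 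This already yields $\frac{\sqrt{\tau_n}}{\sqrt{12}}\normD{r^n}\le\normXD[t_{n-1}]{t_n}{R_{\tau,\mathrm{lin}}(u_\CI)}\le\frac{\sqrt{\tau_n}}{\sqrt3}\normD{r^n}$ — the same reduction (to \cite[Lemma~6.16]{Ver13} for a $t$-affine functional) already invoked in the proof of Lemma~\ref{L:decompR}.

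For (ii), recall $\paar{r^n}{v} = B^{n\theta}(w,v) = \int_\Om\bigl(\ep\nabla w\cdot\nabla v + (\Va^{n\theta}\cdot\nabla w)v + b^{n\theta}wv\bigr)$, and the key remark is that $B^{n\theta}$ inherits the structural conditions of $B$: since $\Va^{n\theta}$ and $b^{n\theta}$ are convex combinations of the time slices of $\Va$ and $b$, assumption (A3) transfers verbatim, i.e.\ $-\halb\Div\Va^{n\theta}+b^{n\theta}\ge\be$ and $\normL[\infty]{\Om}{b^{n\theta}}\le\creac\be$. For the upper estimate I would split $B^{n\theta}(w,v)$ into its three terms and bound them by $\normE{w}\normE{v}$ (Cauchy--Schwarz and \eqref{E:Enorm}), by $\normD{\Va^{n\theta}\cdot\nabla w}\normE{v}$ (the definition \eqref{E:Dnorm}, noting $\Va^{n\theta}\cdot\nabla w\in L^2(\Om)\hookrightarrow H^{-1}(\Om)$), and by $\creac\normE{w}\normE{v}$ (via $\normL[\infty]{\Om}{b^{n\theta}}\le\creac\be$, \eqref{E:L2Enorm}, and $\be\la^2\le1$), obtaining $\normD{r^n}$ bounded by a $\creac$-multiple of $\normE{w}+\normD{\Va^{n\theta}\cdot\nabla w}$. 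For the lower estimate I would test $r^n$ with $w$ itself: integrating the convective term by parts gives $B^{n\theta}(w,w) = \ep\normL{\Om}{\nabla w}^2 + \int_\Om(-\halb\Div\Va^{n\theta}+b^{n\theta})w^2\ge\normE{w}^2$, whence $\normD{r^n}\ge\normE{w}$; rearranging the split then bounds $\normD{\Va^{n\theta}\cdot\nabla w}$ by $\normD{r^n}$ plus a $\creac$-multiple of $\normE{w}$, so that $\normE{w}+\normD{\Va^{n\theta}\cdot\nabla w}$ is controlled by a $\creac$-multiple of $\normD{r^n}$. Feeding these one-sided comparisons into the bounds from (i), and cleaning up the $\creac$-dependent constants with the standing assumption $\creac\ge1$, produces the two displayed inequalities. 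This packaging of (i) and (ii) is exactly \cite[Lemma~6.17]{Ver13}, so in the write-up the whole argument can simply be quoted from there.

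The routine ingredients — the scalar integral, the triangle inequalities, the integration by parts — are harmless. The one genuinely delicate point is that every estimate in (ii) must be performed in the $\ep$- and $\be$-weighted norms $\normE{\cdot}$ and $\normD{\cdot}$, so that no hidden negative power of $\ep$ or $\be$ sneaks into the constants: the reaction term has to be controlled through $\normL[\infty]{\Om}{b^{n\theta}}\le\creac\be$ together with $\be\la^2\le1$ rather than through a bare $L^\infty$-bound, and coercivity of $B^{n\theta}$ must be read off from the sign condition $-\halb\Div\Va^{n\theta}+b^{n\theta}\ge\be$ rather than from Friedrichs' inequality, precisely so that the reaction-free part of the convection-dominated regime $\ep\ll1$ is treated uniformly. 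Checking that the convex combinations $\Va^{n\theta}$, $b^{n\theta}$ inherit (A3) is elementary but indispensable, since \cite[Lemma~6.17]{Ver13} is stated for convection--diffusion--reaction forms obeying exactly those hypotheses.
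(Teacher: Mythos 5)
Your proposal is correct and takes essentially the same route as the paper, whose proof consists of factoring out the affine-in-time weight $\bigl(\theta-\frac{t-t_{n-1}}{\tau_n}\bigr)$ and citing \cite[Lemma 6.17]{Ver13} for the two-sided comparison of $\normD{r^n}$ with $\normE{u^n_{\CT_n}-u^{n-1}_{\CT_{n-1}}}+\normD{\Va^{n\theta}\cdot\nabla(u^n_{\CT_n}-u^{n-1}_{\CT_{n-1}})}$; your reconstruction of that lemma (the temporal integral lying in $[\tau_n/12,\tau_n/3]$, Cauchy--Schwarz and coercivity in the $\ep,\be$-weighted norms, and the inheritance of (A3) by the convex combinations $\Va^{n\theta}$, $b^{n\theta}$) is exactly what stands behind the citation. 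The one mismatch concerns the constant in the upper estimate: your argument yields the factor $1+\creac$ in the numerator, whereas the statement displays $\creac$ in the denominator --- since coercivity already gives $\normD{r^n}\ge\normE{u^n_{\CT_n}-u^{n-1}_{\CT_{n-1}}}$, the factor $\frac{1}{\sqrt{3}\,\creac}$ cannot hold for $\creac>1$ and is evidently a typo for a $\creac$-multiple, so your version is the defensible one.
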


The term $\normD{\Va^{n\theta} \cdot \nabla (u^n_{\CT_n} - u^{n-1}_{\CT_{n-1}})}$ is not suited for a posteriori error estimates since it involves the dual norm $\normD{\cdot}$. The next two Lemmas bound this term for the case of dominant diffusion, i.e. $\ep \gtrsim 1$, and of dominant convection, i.e. $\ep \ll 1$, respectively. The first one follows from H{\"o}lder's inequality and \eqref{E:Fried}, the second one from \cite[Lemma 6.18]{Ver13}.

\begin{LEM}[Bounding the convective derivative for dominant diffusion]\label{L:convder-domdiff}
For every $n \in \left\{ 1, \ldots, N_\CI \right\}$ we have
\begin{equation*}
\normD{\Va^{n\theta} \cdot \nabla (u^n_{\CT_n} - u^{n-1}_{\CT_{n-1}})} \le \ep^{-\halb} \la \normL[\infty]{\Om}{\Va^{n\theta}} \normE{u^n_{\CT_n} - u^{n-1}_{\CT_{n-1}}}.
\end{equation*}
\end{LEM}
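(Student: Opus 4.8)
The plan is to estimate the dual norm $\normD{\Va^{n\theta} \cdot \nabla w}$ directly from its definition \eqref{E:Dnorm}, where I abbreviate $w = u^n_{\CT_n} - u^{n-1}_{\CT_{n-1}} \in H^1_0(\Om)$. For an arbitrary test function $v \in H^1_0(\Om) \setminus \{0\}$ I would write $\paar{\Va^{n\theta} \cdot \nabla w}{v} = \int_\Om (\Va^{n\theta} \cdot \nabla w) v$ and apply H\"older's inequality, pulling out the $L^\infty$-norm of the convection field: $\int_\Om (\Va^{n\theta} \cdot \nabla w) v \le \normL[\infty]{\Om}{\Va^{n\theta}} \normL{\Om}{\nabla w} \normL{\Om}{v}$.

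The next step is to convert the two $L^2$-factors into the energy norm \eqref{E:Enorm}. For the factor $\normL{\Om}{\nabla w}$ I use $\ep^\halb \normL{\Om}{\nabla w} \le \normE{w}$, which is immediate from the definition of $\normE{\cdot}$. For the factor $\normL{\Om}{v}$ I invoke \eqref{E:L2Enorm}, namely $\normL{\Om}{v} \le \la \normE{v}$. Combining these gives
\begin{equation*}
\paar{\Va^{n\theta} \cdot \nabla w}{v} \le \ep^{-\halb} \la \normL[\infty]{\Om}{\Va^{n\theta}} \normE{w} \normE{v}.
\end{equation*}
Dividing by $\normE{v}$ and taking the supremum over all admissible $v$ yields exactly the claimed bound, via the definition \eqref{E:Dnorm} of $\normD{\cdot}$.

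There is essentially no obstacle here: the whole argument is a two-line chain of H\"older's inequality plus the already-recorded inequalities \eqref{E:L2Enorm} and the trivial $\ep^\halb\normL{\Om}{\nabla w}\le\normE{w}$. The only point requiring the slightest care is bookkeeping of the constant $\la = \min\{\cfr \ep^{-\halb}, \be^{-\halb}\}$: one must recall that \eqref{E:L2Enorm} is the sharp form of Friedrich's inequality adapted to the $\ep$- and $\be$-weighted energy norm, so that the factor $\la$ (rather than $\cfr\ep^{-\halb}$) appears and the estimate stays meaningful in the reaction-dominated case as well. This is precisely why the statement attributes the result to H\"older's inequality together with \eqref{E:Fried}.
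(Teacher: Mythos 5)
Your argument is exactly the one the paper intends: it states that the lemma ``follows from H\"older's inequality and \eqref{E:Fried}'' and gives no further details, and your two-line chain (H\"older, then $\ep^\halb\normL{\Om}{\nabla w}\le\normE{w}$ and $\normL{\Om}{v}\le\la\normE{v}$ from \eqref{E:L2Enorm}, then take the supremum in \eqref{E:Dnorm}) is precisely that argument carried out. The proof is correct and matches the paper's approach.
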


\begin{LEM}[Bounding the convective derivative for dominant convection]\label{L:convder-domconv}
\hfill For \linebreak[4]every $n \in \left\{ 1, \ldots, N_\CI \right\}$ denote by $S^{1,0}_0(\widetilde \CT_n)$ the space of continuous, piecewise affine functions vanishing on $\Ga$ corresponding to the partition $\widetilde \CT_n$ and by $\widetilde u^{n}_{\CT_n} \in S^{1,0}_0(\widetilde \CT_n)$ the unique solution of the discrete reaction-diffusion problem
\begin{equation*}
\ep \int_\Om \nabla \widetilde u^{n}_{\CT_n} \cdot \nabla v_{\CT_n} + \be \int_\Om \widetilde u^{n}_{\CT_n}
v_{\CT_n} = \int_\Om \Va^{n\theta} \cdot \nabla (u^n_{\CT_n} - u^{n-1}_{\CT_{n-1}}) v_{\CT_n}
\end{equation*}
for all $v_{\CT_n} \in S^{1,0}_0(\widetilde \CT_n)$. Define the error indicator $\widetilde \eta^{n}_{\CT_n}$ by
\begin{equation*}
\begin{split}
\widetilde \eta^{n}_{\CT_n} &= \left\{ \sum_{K \in \widetilde \CT_n}
\mdiam{K}^2 \normL{K}{\Va^{n\theta} \cdot \nabla (u^n_{\CT_n} - u^{n-1}_{\CT_{n-1}}) + \ep \De \widetilde u^{n}_{\CT_n} - \be \widetilde u^{n}_{\CT_n}}^2 \right. \\
&\quad\quad \left. + \sum_{E \in \widetilde \CE_{n,\Om}} \ep^{- \halb} \mdiam{E}
\normL{E}{\jump{\Vn_E \cdot \nabla \widetilde u^{n}_{\CT_n}}}^2 \right\}^\halb,
\end{split}
\end{equation*}
and the data error $\widetilde \theta^{n}_{\CT_n}$ by
\begin{equation*}
\widetilde \theta^{n}_{\CT_n} = \left\{ \sum_{K \in \widetilde \CT_n} \mdiam{K}^2 \normL{K}{(\Va^{n\theta} - \Va^{n\theta}_{\widetilde \CT_n}) \cdot \nabla (u^n_{\CT_n} - u^{n-1}_{\CT_{n-1}})}^2 \right\}^\halb
\end{equation*}
where $\mdiam{\om} = \min \left\{ \ep^{- \halb} \diam(\om), \be^{- \halb} \right\}$ and $\Va^{n\theta}_{\widetilde \CT_n}$ is an approximation of $\Va^{n\theta}$ on $\widetilde \CT_n$. Then there are two constants $c_\dagger$ and $c^\dagger$ which only depend on the shape-parameters $\shape$ and $\shape[\widetilde \CT,\CT]$ such that the following estimates are valid
\begin{equation*}
c_\dagger \left\{ \normE{\widetilde u^{n}_{\CT_n}} + \widetilde \eta^{n}_{\CT_n} - \widetilde \theta^{n}_{\CT_n}\right\} \le \normD{\Va^{n\theta} \cdot \nabla (u^n_{\CT_n} - u^{n-1}_{\CT_{n-1}})} \le c^\dagger
\left\{ \normE{\widetilde u^{n}_{\CT_n}} + \widetilde \eta^{n}_{\CT_n} \right\}.
\end{equation*}
\end{LEM}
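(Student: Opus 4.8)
The statement is a two-sided estimate for the dual norm $\normD{\Va^{n\theta}\cdot\nabla(u^n_{\CT_n}-u^{n-1}_{\CT_{n-1}})}$ in the convection-dominated regime, and the natural strategy is to realize this dual norm through an auxiliary reaction-diffusion problem and then apply the standard residual a posteriori machinery for reaction-diffusion equations. Write $w = \Va^{n\theta}\cdot\nabla(u^n_{\CT_n}-u^{n-1}_{\CT_{n-1}}) \in L^2(\Om)$, which defines a functional on $H^1_0(\Om)$ via $v\mapsto \int_\Om w\,v$. First I would observe that, by the definition \eqref{E:Dnorm} of $\normD{\cdot}$ and the energy norm \eqref{E:Enorm}, $\normD{w}$ equals the $\normE{\cdot}$-norm of the Riesz representative $z\in H^1_0(\Om)$ of $v\mapsto\int_\Om w v$, i.e.\ the weak solution of the exact reaction-diffusion problem $\ep\int_\Om\nabla z\cdot\nabla v+\be\int_\Om z v=\int_\Om w v$ for all $v\in H^1_0(\Om)$; indeed $\normD{w}=\normE{z}$ exactly. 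The discrete function $\widetilde u^n_{\CT_n}\in S^{1,0}_0(\widetilde\CT_n)$ defined in the statement is precisely the finite element approximation of $z$ on the refined mesh $\widetilde\CT_n$.

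The key step is then to invoke the robust (with respect to $\ep$ relative to $\be$) residual-type a posteriori error estimate for the reaction-diffusion pair $(z,\widetilde u^n_{\CT_n})$; this is exactly the content of \cite[Lemma 6.18]{Ver13}, which supplies constants $c_\dagger,c^\dagger$ depending only on the shape parameters $\shape$ and $\shape[\widetilde\CT,\CT]$ such that
\begin{equation*}
c_\dagger\,\widetilde\eta^n_{\CT_n}\ \le\ \normE{z-\widetilde u^n_{\CT_n}}+\text{(data term)}\ \le\ c^\dagger\,\widetilde\eta^n_{\CT_n},
\end{equation*}
where the element and face residuals entering $\widetilde\eta^n_{\CT_n}$ are computed against the right-hand side $w$ and the weighting factors are the correct local quantities $\mdiam{K}=\min\{\ep^{-\halb}\diam(K),\be^{-\halb}\}$ and $\ep^{-\halb}\mdiam{E}$ that make the bound uniform in the singular-perturbation parameter. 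The data error $\widetilde\theta^n_{\CT_n}$ accounts for replacing the exact convective field $\Va^{n\theta}$ by its mesh approximation $\Va^{n\theta}_{\widetilde\CT_n}$ inside the element residual (so that the indicator is genuinely computable), and it enters the lower bound with a minus sign in the usual way. Combining $\normD{w}=\normE{z}$ with the triangle inequality $\bigl|\normE{z}-\normE{\widetilde u^n_{\CT_n}}\bigr|\le\normE{z-\widetilde u^n_{\CT_n}}$ and the two-sided bound on $\normE{z-\widetilde u^n_{\CT_n}}$ produces
\begin{equation*}
c_\dagger\{\normE{\widetilde u^n_{\CT_n}}+\widetilde\eta^n_{\CT_n}-\widetilde\theta^n_{\CT_n}\}\ \le\ \normD{w}\ \le\ c^\dagger\{\normE{\widetilde u^n_{\CT_n}}+\widetilde\eta^n_{\CT_n}\},
\end{equation*}
after adjusting $c_\dagger,c^\dagger$ by harmless absolute factors, which is the claim.

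The main obstacle, such as it is, is bookkeeping rather than mathematics: one must check that the hypotheses of \cite[Lemma 6.18]{Ver13} are met with $\widetilde\CT_n$ in place of the generic mesh — affine-equivalence, admissibility, and shape-regularity of $\widetilde\CT_n$ all hold by the refinement assumption in Section \ref{S:discprob}, and $w$ indeed lies in $L^2(\Om)$ since $\Va^{n\theta}\in W^{1,\infty}(\Om)^d$ and $u^n_{\CT_n}-u^{n-1}_{\CT_{n-1}}$ is piecewise polynomial, so its elementwise gradient is bounded. A secondary subtlety is that the element residual in $\widetilde\eta^n_{\CT_n}$ uses $\ep\De\widetilde u^n_{\CT_n}$ (which vanishes on each simplex since $\widetilde u^n_{\CT_n}$ is piecewise affine) and the face jumps $\jump{\Vn_E\cdot\nabla\widetilde u^n_{\CT_n}}$, exactly as in the reaction-diffusion template; one should note that the dependence of $c_\dagger,c^\dagger$ on $\shape[\widetilde\CT,\CT]$ rather than merely $\shape$ is what is needed to relate quantities on $\widetilde\CT_n$ back to the original pair $\CT_n,\CT_{n-1}$ on which $u^n_{\CT_n}$ and $u^{n-1}_{\CT_{n-1}}$ are defined. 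Since \cite[Lemma 6.18]{Ver13} is stated in precisely this robust form, invoking it directly is legitimate and the proof reduces to the two displays above together with these verifications.
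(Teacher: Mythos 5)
Your proposal is correct and follows essentially the same route as the paper, which proves this lemma simply by invoking \cite[Lemma 6.18]{Ver13}; what you have written out — identifying $\normD{\Va^{n\theta}\cdot\nabla(u^n_{\CT_n}-u^{n-1}_{\CT_{n-1}})}$ with the energy norm of the Riesz representative $z$, recognizing $\widetilde u^{n}_{\CT_n}$ as its Galerkin approximation on $\widetilde\CT_n$, and applying the robust residual a posteriori bounds for singularly perturbed reaction-diffusion problems with the weights $\mdiam{K}$ together with the triangle inequality — is precisely the content of that cited lemma. No gaps.
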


Next we bound the non-linear part of the temporal residual.

\begin{LEM}[Upper bounds for the non-linear temporal residual]\label{L:nonlintempres}
For every $n \linebreak[4]\in \left\{ 1, \ldots, N_\CI \right\}$, the non-linear part of the temporal residual can be bounded from above by
\begin{equation*}
\begin{split}
\normXD[t_{n-1}]{t_n}{R_{\tau,\text{nonlin}}(u_\CI)}
&\le \sqrt{\frac{\tau_n}{3}} \nu L \la \ga \normL{\Om}{u^n_{\CT_n} - u^{n-1}_{\CT_{n-1}}} \\
&\le \sqrt{\frac{\tau_n}{3}} \nu L \la^2 \ga \normE{u^n_{\CT_n} - u^{n-1}_{\CT_{n-1}}}.
\end{split}
\end{equation*}
\end{LEM}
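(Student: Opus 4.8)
The plan is to estimate $\normXD[t_{n-1}]{t_n}{R_{\tau,\text{nonlin}}(u_\CI)}$ by first computing the dual norm $\normD{\cdot}$ of the integrand pointwise in $t\in[t_{n-1},t_n]$ and then integrating over the sub-interval. By the definition of the non-linear part of the temporal residual, for every $v\in H^1_0(\Om)$ we have $\paar{R_{\tau,\text{nonlin}}(u_\CI)}{v} = \paar{N^{n\vartheta}(u_\CI)}{v} - \paar{N^{n\vartheta}(U^{n\vartheta})}{v}$, which is the difference $N^{n\vartheta}(u_\CI) - N^{n\vartheta}(U^{n\vartheta})$ tested against $v$. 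Since $N^{n\vartheta}$ has exactly the same structure as $N$ but with $g$ replaced by $g^{n\vartheta}$, the estimate of Lemma~\ref{L:LipN} applies verbatim with $\ga(t)$ replaced by $\normL[\infty]{\Om}{g^{n\vartheta}}$; by the convexity of the $L^\infty$-norm and the definition \eqref{E:ntheta} of $g^{n\vartheta}$ this is bounded by $\max_{t_{n-1}\le s\le t_n}\ga(s)\le\ga$. Hence
\begin{equation*}
\normD{R_{\tau,\text{nonlin}}(u_\CI)(\cdot,t)} \le \nu L \la \ga\, \normL{\Om}{u_\CI(\cdot,t) - U^{n\vartheta}}.
\end{equation*}

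Next I would exploit the explicit affine-in-time structure of $u_\CI$ on $[t_{n-1},t_n]$, namely $u_\CI(\cdot,t) = \frac{t-t_{n-1}}{\tau_n}u^n_{\CT_n} + \frac{t_n-t}{\tau_n}u^{n-1}_{\CT_{n-1}}$, together with $U^{n\vartheta} = \vartheta u^n_{\CT_n} + (1-\vartheta)u^{n-1}_{\CT_{n-1}}$. Subtracting gives $u_\CI(\cdot,t) - U^{n\vartheta} = \left(\frac{t-t_{n-1}}{\tau_n} - \vartheta\right)(u^n_{\CT_n} - u^{n-1}_{\CT_{n-1}})$, so the $L^2(\Om)$-norm factorizes as $\betrag{\frac{t-t_{n-1}}{\tau_n}-\vartheta}\,\normL{\Om}{u^n_{\CT_n} - u^{n-1}_{\CT_{n-1}}}$. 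Squaring the pointwise bound and integrating,
\begin{equation*}
\normXD[t_{n-1}]{t_n}{R_{\tau,\text{nonlin}}(u_\CI)}^2 \le \nu^2 L^2 \la^2 \ga^2 \normL{\Om}{u^n_{\CT_n} - u^{n-1}_{\CT_{n-1}}}^2 \int_{t_{n-1}}^{t_n}\left(\frac{t-t_{n-1}}{\tau_n}-\vartheta\right)^2\,\dx t.
\end{equation*}
A substitution $\si=(t-t_{n-1})/\tau_n$ turns the integral into $\tau_n\int_0^1(\si-\vartheta)^2\,\dx\si = \tau_n\bigl(\tfrac13 - \vartheta + \vartheta^2\bigr)$, and since $\vartheta\in[0,1]$ this quantity is at most $\tau_n/3$ (the maximum of $\frac13-\vartheta+\vartheta^2$ over $[0,1]$ is attained at the endpoints with value $\frac13$). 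Taking square roots yields the first asserted inequality, and the second follows immediately from \eqref{E:L2Enorm}, i.e. $\normL{\Om}{u^n_{\CT_n}-u^{n-1}_{\CT_{n-1}}}\le\la\normE{u^n_{\CT_n}-u^{n-1}_{\CT_{n-1}}}$.

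This proof is essentially routine: the only point requiring a moment's care is confirming that the Lipschitz estimate of Lemma~\ref{L:LipN} transfers to $N^{n\vartheta}$ with the constant $\ga$ rather than a time-dependent $\ga(t)$, which is where the convexity of the sup-norm and assumption~(A4) enter. If one prefers to keep the sharper $\max_{t_{n-1}\le t\le t_n}\ga(t)$ in place of $\ga$ the same argument delivers it verbatim; I would state it with $\ga$ for consistency with the global bounds. No genuine obstacle arises, as both the temporal integral and the spatial factorization are exact.
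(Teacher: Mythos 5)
Your argument is correct and is essentially the paper's own proof: both combine the Lipschitz estimate of Lemma~\ref{L:LipN} (applied with $g^{n\vartheta}$, whose sup-norm is bounded by $\ga$) with the factorization $u_\CI - U^{n\vartheta} = \bigl(\tfrac{t-t_{n-1}}{\tau_n}-\vartheta\bigr)\bigl(u^n_{\CT_n}-u^{n-1}_{\CT_{n-1}}\bigr)$ and the identity $\int_{t_{n-1}}^{t_n}\bigl(\vartheta-\tfrac{t-t_{n-1}}{\tau_n}\bigr)^2 = \tau_n\bigl(\tfrac13-\vartheta+\vartheta^2\bigr)\le\tfrac{\tau_n}{3}$, followed by \eqref{E:L2Enorm}. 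Your explicit remark on transferring the Lipschitz bound from $N$ to $N^{n\vartheta}$ is a point the paper leaves implicit, but it changes nothing of substance.
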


\begin{proof}
The assertion follows from \eqref{E:L2Enorm}, Lemma \ref{L:LipN},
\begin{equation*}
\int_{t_{n-1}}^{t_n} \normL{\Om}{u_\CI - U^{n\vartheta}}^2 \le \normL{\Om}{u^n_{\CT_n} - u^{n-1}_{\CT_{n-1}}}^2 \int_{t_{n-1}}^{t_n} \left( \vartheta - \frac{t - t_{n-1}}{\tau_n} \right)^2,
\end{equation*}
and
\begin{equation*}
\int_{t_{n-1}}^{t_n} \left( \vartheta - \frac{t - t_{n-1}}{\tau_n} \right)^2 = \frac{\tau_n}{6} \left[ 2 - 6 \vartheta(1 - \vartheta) \right] \le \frac{\tau_n}{3}.
\end{equation*}
\end{proof}

Lemma \ref{L:nonlintempres} and the estimate
\begin{equation*}
\normL{\Om}{u^n_{\CT_n} - u^{n-1}_{\CT_{n-1}}} \le 2 \sup_{t_{n-1} \le t \le t_n} \normL{\Om}{(u - u_\CI)(\cdot,t)} + \sqrt{\tau_n} \normL{\Om \times (t_{n-1},t_n)}{\partial_t u}
\end{equation*}
yield the following upper bound for the non-linear part of the temporal residual for all parameters $\ep$, $\be$, $\nu$, and $\ga$.

\begin{LEM}[Non-linear temporal residual and error]\label{L:nonlintempreserr}
For all parameters $\ep$, $\be$, $\nu$, and $\ga$ the non-linear part of the temporal residual is bounded from above by the error and the $L^2$-norm of $\partial_t u$,  i.e. for every $n \in \left\{ 1, \ldots, N_\CI \right\}$ we have
\begin{equation*}
\begin{split}
\normXD[t_{n-1}]{t_n}{R_{\tau,\text{nonlin}}(u_\CI)} &\le \frac{2 \sqrt{\tau_n}}{\sqrt{3}} \nu L \la \ga \sup_{t_{n-1} \le t \le t_n} \normL{\Om}{(u - u_\CI)(\cdot,t)} \\
&\quad+ \frac{\tau_n}{\sqrt{3}} \nu L \la \ga \normL{\Om \times (t_{n-1},t_n)}{\partial_t u}.
\end{split}
\end{equation*}
\end{LEM}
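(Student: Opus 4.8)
The plan is to reduce the claim to the bound already established in Lemma~\ref{L:nonlintempres}, replacing the discrete increment $\normL{\Om}{u^n_{\CT_n} - u^{n-1}_{\CT_{n-1}}}$ --- which is not directly expressible through the error --- by a combination of the error and a local time-integral of $\partial_t u$. First I would recall from Lemma~\ref{L:nonlintempres} the intermediate estimate
\[
\normXD[t_{n-1}]{t_n}{R_{\tau,\text{nonlin}}(u_\CI)} \le \sqrt{\frac{\tau_n}{3}}\, \nu L \la \ga\, \normL{\Om}{u^n_{\CT_n} - u^{n-1}_{\CT_{n-1}}},
\]
so that it suffices to bound $\normL{\Om}{u^n_{\CT_n} - u^{n-1}_{\CT_{n-1}}}$.

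Since $u_\CI$ is the continuous, piecewise-affine-in-time interpolant of the discrete solution, we have $u_\CI(\cdot,t_n) = u^n_{\CT_n}$ and $u_\CI(\cdot,t_{n-1}) = u^{n-1}_{\CT_{n-1}}$, whence the telescoping identity
\[
u^n_{\CT_n} - u^{n-1}_{\CT_{n-1}} = (u_\CI - u)(\cdot,t_n) + \bigl( u(\cdot,t_n) - u(\cdot,t_{n-1}) \bigr) + (u - u_\CI)(\cdot,t_{n-1}).
\]
Applying the triangle inequality in $L^2(\Om)$, I would bound the first and third terms on the right by $\sup_{t_{n-1} \le t \le t_n} \normL{\Om}{(u - u_\CI)(\cdot,t)}$ each, and the middle one by writing $u(\cdot,t_n) - u(\cdot,t_{n-1}) = \int_{t_{n-1}}^{t_n} \partial_t u(\cdot,s)\,\dx s$ and invoking the Cauchy--Schwarz inequality in time, which gives $\normL{\Om}{u(\cdot,t_n) - u(\cdot,t_{n-1})} \le \sqrt{\tau_n}\, \normL{\Om \times (t_{n-1},t_n)}{\partial_t u}$. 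This reproduces precisely the auxiliary estimate stated just before the lemma; inserting it into the bound from Lemma~\ref{L:nonlintempres} and expanding the product yields the two asserted terms, with constants $\frac{2\sqrt{\tau_n}}{\sqrt{3}}\,\nu L \la \ga$ and $\frac{\tau_n}{\sqrt{3}}\,\nu L \la \ga$.

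There is essentially no obstacle here --- it is a bookkeeping step. The only point needing a word of care is the regularity used in the middle term: a priori $\partial_t u$ lies only in $L^2(0,T;H^{-1}(\Om))$, so $\normL{\Om \times (t_{n-1},t_n)}{\partial_t u}$ may be $+\infty$, in which case the inequality is trivially true, while if it is finite the fundamental theorem of calculus together with Cauchy--Schwarz is legitimate. The role of the lemma is precisely to trade the not-directly-computable increment norm for a quantity involving only the error and $\partial_t u$, which is what the final absorption/Gronwall argument in the main a posteriori estimate will need.
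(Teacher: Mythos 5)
Your proof is correct and follows exactly the same route as the paper, which likewise combines the first bound of Lemma~\ref{L:nonlintempres} with the triangle-inequality estimate $\normL{\Om}{u^n_{\CT_n} - u^{n-1}_{\CT_{n-1}}} \le 2 \sup_{t_{n-1} \le t \le t_n} \normL{\Om}{(u - u_\CI)(\cdot,t)} + \sqrt{\tau_n} \normL{\Om \times (t_{n-1},t_n)}{\partial_t u}$ displayed just before the lemma. Your remark on the regularity of $\partial_t u$ is a sensible extra precaution but does not change the argument.
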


If, on the other hand, $\nu L \la^2 \ga$ is sufficiently small, Lemmas \ref{L:decompR}, \ref{L:lintempres}, and \ref{L:nonlintempres} imply that the temporal residual is dominated by its linear part.

\begin{LEM}[Non-linear and linear temporal residual]\label{L:lintempresdom}
If $\widetilde \ka = 25 \left( 2 + \creac \right) \nu L \la^2 \ga \linebreak[4]< 1$, the temporal residual is dominated by its linear part, i.e. for every $n \in \left\{ 1, \ldots, N_\CI \right\}$ we have
\begin{equation*}
\begin{split}
&\frac{2}{25} (1 - \widetilde \ka) \left\{ \normXD[t_{n-1}]{t_n}{R_{\tau,\text{lin}}(u_\CI)}^2 + \normXD[t_{n-1}]{t_n}{R_h(u_\CI)}^2 \right\}^\halb \\
&\quad\le \normXD[t_{n-1}]{t_n}{R_\tau(u_\CI) + R_h(u_\CI)}.
\end{split}
\end{equation*}
\end{LEM}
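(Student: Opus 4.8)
The plan is to combine the two inequalities of Lemma~\ref{L:decompR} with the upper bound of Lemma~\ref{L:nonlintempres} for $\normXD[t_{n-1}]{t_n}{R_{\tau,\text{nonlin}}(u_\CI)}$ and the lower bound of Lemma~\ref{L:lintempres} for $\normXD[t_{n-1}]{t_n}{R_{\tau,\text{lin}}(u_\CI)}$, so that the non-linear temporal residual gets absorbed into the left-hand side. Concretely, start from the second estimate in Lemma~\ref{L:decompR},
\begin{equation*}
\frac{2}{25}\left\{\normXD[t_{n-1}]{t_n}{R_{\tau,\text{lin}}(u_\CI)}^2 + \normXD[t_{n-1}]{t_n}{R_h(u_\CI)}^2\right\}^\halb \le \normXD[t_{n-1}]{t_n}{R_\tau(u_\CI) + R_h(u_\CI)} + \normXD[t_{n-1}]{t_n}{R_{\tau,\text{nonlin}}(u_\CI)},
\end{equation*}
and bound the last term on the right.

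First I would use Lemma~\ref{L:nonlintempres} in the form $\normXD[t_{n-1}]{t_n}{R_{\tau,\text{nonlin}}(u_\CI)} \le \sqrt{\tau_n/3}\,\nu L \la^2 \ga \normE{u^n_{\CT_n} - u^{n-1}_{\CT_{n-1}}}$, and then use the lower bound of Lemma~\ref{L:lintempres} to replace $\sqrt{\tau_n}\,\normE{u^n_{\CT_n} - u^{n-1}_{\CT_{n-1}}}$ by a multiple of $\normXD[t_{n-1}]{t_n}{R_{\tau,\text{lin}}(u_\CI)}$. Since that lower bound reads $\frac{\sqrt{\tau_n}}{\sqrt{12}(2+\creac)}\normE{u^n_{\CT_n}-u^{n-1}_{\CT_{n-1}}} \le \normXD[t_{n-1}]{t_n}{R_{\tau,\text{lin}}(u_\CI)}$ (the extra dual-norm term is non-negative and is simply dropped), we get
\begin{equation*}
\normXD[t_{n-1}]{t_n}{R_{\tau,\text{nonlin}}(u_\CI)} \le \frac{\sqrt{12}(2+\creac)}{\sqrt{3}}\,\nu L \la^2 \ga\,\normXD[t_{n-1}]{t_n}{R_{\tau,\text{lin}}(u_\CI)} = 2(2+\creac)\,\nu L \la^2 \ga\,\normXD[t_{n-1}]{t_n}{R_{\tau,\text{lin}}(u_\CI)}.
\end{equation*}
Writing $\widetilde\ka = 25(2+\creac)\nu L\la^2\ga$, the right-hand side is $\frac{2}{25}\widetilde\ka\,\normXD[t_{n-1}]{t_n}{R_{\tau,\text{lin}}(u_\CI)} \le \frac{2}{25}\widetilde\ka\left\{\normXD[t_{n-1}]{t_n}{R_{\tau,\text{lin}}(u_\CI)}^2 + \normXD[t_{n-1}]{t_n}{R_h(u_\CI)}^2\right\}^\halb$.

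Substituting this into the Lemma~\ref{L:decompR} estimate and moving the $\widetilde\ka$-term to the left gives
\begin{equation*}
\frac{2}{25}(1-\widetilde\ka)\left\{\normXD[t_{n-1}]{t_n}{R_{\tau,\text{lin}}(u_\CI)}^2 + \normXD[t_{n-1}]{t_n}{R_h(u_\CI)}^2\right\}^\halb \le \normXD[t_{n-1}]{t_n}{R_\tau(u_\CI) + R_h(u_\CI)},
\end{equation*}
which is the claim, and the hypothesis $\widetilde\ka < 1$ is exactly what makes the coefficient $\frac{2}{25}(1-\widetilde\ka)$ positive. I do not anticipate a genuine obstacle here: the only point requiring a little care is the bookkeeping of constants — checking that $\sqrt{12}/\sqrt{3} = 2$ and that dropping the non-negative $\normD{\Va^{n\theta}\cdot\nabla(\cdots)}$ terms in both Lemma~\ref{L:lintempres} and Lemma~\ref{L:nonlintempres} (the latter via the chain through $\normL{\Om}{\cdot} \le \la\normE{\cdot}$ already built into Lemma~\ref{L:nonlintempres}) is legitimate for the direction of the inequality being proved.
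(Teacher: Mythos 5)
Your proposal is correct and follows exactly the route the paper intends: the lemma is stated as a consequence of Lemmas \ref{L:decompR}, \ref{L:lintempres}, and \ref{L:nonlintempres}, and your chain — second inequality of Lemma \ref{L:decompR}, then $\normXD[t_{n-1}]{t_n}{R_{\tau,\text{nonlin}}(u_\CI)} \le \frac{2}{25}\widetilde\ka\,\normXD[t_{n-1}]{t_n}{R_{\tau,\text{lin}}(u_\CI)}$ via Lemma \ref{L:nonlintempres} and the (dual-norm-dropped) lower bound of Lemma \ref{L:lintempres}, then absorption — is precisely that argument with the constants correctly tracked. No gaps.
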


\subsection{Bounding the spatial residual}\label{S:spatres} Replacing in \cite[Lemma 3.5]{TobVer15}  the right-hand side $f^{n\theta}$ by $\nu \vfi(U^{n\theta}) g^{n\theta}$ yields the following bounds for the spatial residual.

\begin{LEM}[Bounds for the spatial residual]\label{L:spatres}
For every $n \in \{ 1, \ldots, N_\CI \}$ define a spatial error indicator by
\begin{equation*}
\begin{split}
\eta^n_{\CT_n} &= \left\{ \sum_{K \in \widetilde \CT_n} \mdiam{K}^2 \left\lVert \nu \vfi(\overline U^{n\theta}_{\CT_n}) g^{n\theta}_{\CT_n} - \frac{1}{\tau_n} \left( u^n_{\CT_n} - u^{n-1}_{\CT_{n-1}} \right) + \ep \Delta U^{n\theta} \right.\right. \\
&\quad\quad\left.\phantom{\frac{u^n_{\CT_n} - u^{n-1}_{\CT_{n-1}}}{\tau_n}} - \Va^{n\theta}_{\CT_n} \cdot \nabla U^{n\theta} - b^{n\theta}_{\CT_n} U^{n\theta} \right\rVert_K^2 \\
&\quad\quad\quad \left. + \halb \sum_{E \in \widetilde \CE_{n,\Om}} \ep^{- \halb} \mdiam{E} \normL{E}{\jump{\ep \Vn_E \cdot \nabla U^{n\theta}}}^2 \right\}^\halb
\end{split}
\end{equation*}
and spatial data errors by
\begin{equation*}
\begin{split}
\theta^n_{\CT_n} &= \left\{ \sum_{K \in \CT_n} \mdiam{K}^2 \Bigl\lVert \nu \vfi(U^{n\theta}) \left( g^{n\theta}_{\CT_n} - g^{n\theta} \right) + \nu (\vfi(U^{n\theta}) - \vfi(\overline U^{n\theta}_{\CT_n})) g^{n\theta}_{\CT_n} \right. \\
&\quad\quad\quad\quad\left.\phantom{\sum_{K \in \CT_n}} + (\Va^{n\theta}_{\CT_n} - \Va^{n\theta}) \cdot \nabla U^{n\theta} + (b^{n\theta}_{\CT_n} - b^{n\theta}) U^{n\theta} \Bigr\rVert_K^2 \right\}^\halb, \\
\Theta^n_{\mathop{cip},\CT_n} &= \left\{ \sum_{K \in \CT_n} \mdiam{K}^2 \normL{K}{\left( \Va^{n\theta} - \Va^{n\theta}_{\CT_n} \right) \cdot \nabla U^{n\theta}}^2 + \mdiam{K}^2 \hdiamK^2 \normL[\infty]{K}{\nabla \Va^{n\theta}} \normL{K}{\nabla U^{n\theta}}^2 \right\}^\halb.
\end{split}
\end{equation*}
Here, $U^{n\theta} = \theta u^n_{\CT_n} + (1 - \theta) u^{n-1}_{\CT_{n-1}}$ is as in \eqref{E:discprob}, $\overline U^{n\theta}_{\CT_n}$ is a piecewise constant approximation of  $U^{n\theta}$ on $\CT_n$, $g^{n\theta}$, $\Va^{n\theta}$, and $b^{n\theta}$ are as in \eqref{E:ntheta}, and $g^{n\theta}_{\CT_n}$, $\Va^{n\theta}_{\CT_n}$, and $b^{n\theta}_{\CT_n}$ are approximations of $g^{n\theta}$, $\Va^{n\theta}$, and $b^{n\theta}$ on $\CT_n$. Then, on every interval $(t_{n-1},t_n]$, the dual norm of the spatial residual can be bounded from above by
\begin{equation*}
\normD{R_h(u_\CI)} \le c^\flat \left\{ \left( \eta^n_{\CT_n} \right)^2 + \left( \theta^n_{\CT_n} \right)^2 
+ \si_{\mathop{cip}} \left( \Theta^n_{\mathop{cip},\CT_n} \right)^2 \right\}^\halb
\end{equation*}
and from below by
\begin{equation*}
\eta^n_{\CT_n} \le c_\flat \left[ \normD{R_h(u_\CI)} + \theta^n_{\CT_n} \right].
\end{equation*}
Here, the parameter $\si_{\mathop{cip}}$ equals $1$ for the continuous interior penalty method and vanishes for the other stabilizations. The above error estimates are robust in the sense that the constants $c^\flat$ and $c_\flat$ are independent of the parameters $\ep$, $\be$, and $\nu$.
\end{LEM}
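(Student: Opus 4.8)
\begin{BEWI}
The spatial residual $R_h(u_\CI)$ defined in Subsection~\ref{S:decompR} has exactly the same algebraic form as the spatial residual treated in \cite[Lemma 3.5]{TobVer15} and \cite[\S 6.2]{Ver13}; the only difference is that the linear right-hand side $f^{n\theta}$ occurring there is replaced by $\nu \vfi(U^{n\theta}) g^{n\theta}$. Since $\vfi$ is Lipschitz continuous by (A4) and $U^{n\theta}$ is a piecewise polynomial, $\vfi(U^{n\theta})$ is essentially bounded on every element of $\CT_n$; together with $g \in L^\infty(\Om\times(0,T])$ from (A2) this shows that $\nu\vfi(U^{n\theta})g^{n\theta} \in L^\infty(\Om) \subset L^2(\Om)$ and, crucially, that this term involves no spatial derivatives of the discrete solution. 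Hence for a \emph{fixed} discrete solution it plays precisely the role of a data term, and the proof of \cite[Lemma 3.5]{TobVer15} carries over almost verbatim. The plan is therefore to follow that proof, tracking the places where $f^{n\theta}$ enters.

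For the upper bound on $\normD{R_h(u_\CI)}$ I would first use the Galerkin orthogonality $\paar{R_h(u_\CI)}{v_{\CT_n}} = S^n(U^{n\theta},v_{\CT_n})$ for discrete test functions, split an arbitrary $v \in H^1_0(\Om)$ as $v = (v - I_{\CT_n} v) + I_{\CT_n} v$ with a Cl\'ement-type quasi-interpolation operator $I_{\CT_n}$ onto $V(\CT_n)$, and integrate by parts elementwise over the common refinement $\widetilde\CT_n$ (on which $U^{n\theta}$ is a piecewise polynomial) in the contribution of $v - I_{\CT_n} v$; the transition condition guarantees the relevant interpolation and trace estimates stay robust despite the two grids. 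This produces the interior element residuals $\nu \vfi(U^{n\theta}) g^{n\theta} - \tfrac{1}{\tau_n}(u^n_{\CT_n} - u^{n-1}_{\CT_{n-1}}) + \ep \De U^{n\theta} - \Va^{n\theta}\cdot\nabla U^{n\theta} - b^{n\theta} U^{n\theta}$, the jump residuals $\jump{\ep \Vn_E \cdot \nabla U^{n\theta}}$, and the stabilization contribution $S^n(U^{n\theta}, I_{\CT_n} v)$. Bounding these by the estimates for $I_{\CT_n}$, weighted by the reaction--diffusion mesh weights $\mdiam{K} = \min\{\ep^{-\halb}\diam(K),\be^{-\halb}\}$ and $\ep^{-\halb}\mdiam{E}$ that render these estimates robust in $\ep$ and $\be$, one recovers $\eta^n_{\CT_n}$ after passing to the computable approximations $\overline U^{n\theta}_{\CT_n}$, $g^{n\theta}_{\CT_n}$, $\Va^{n\theta}_{\CT_n}$, $b^{n\theta}_{\CT_n}$, the data error $\theta^n_{\CT_n}$ collecting the approximation defects, and the term $\si_{\mathop{cip}}\Theta^n_{\mathop{cip},\CT_n}$ coming from $S^n(U^{n\theta}, I_{\CT_n} v)$ in the continuous interior penalty case (the stabilization being harmless for the streamline diffusion, local projection and subgrid stabilizations, as in \cite{TobVer15}). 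The only genuinely new bookkeeping is the oscillation of the nonlinear right-hand side: writing
\begin{equation*}
\nu \vfi(U^{n\theta}) g^{n\theta} - \nu \vfi(\overline U^{n\theta}_{\CT_n}) g^{n\theta}_{\CT_n} = \nu \vfi(U^{n\theta})\bigl(g^{n\theta}_{\CT_n} - g^{n\theta}\bigr) + \nu\bigl(\vfi(U^{n\theta}) - \vfi(\overline U^{n\theta}_{\CT_n})\bigr) g^{n\theta}_{\CT_n},
\end{equation*}
the second summand is controlled by the Lipschitz bound $\betrag{\vfi(U^{n\theta}) - \vfi(\overline U^{n\theta}_{\CT_n})} \le L\betrag{U^{n\theta} - \overline U^{n\theta}_{\CT_n}}$ from (A4); both summands are then treated as ordinary data terms and enter $\theta^n_{\CT_n}$ precisely in the form displayed in the statement.

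For the lower bound I would invoke Verf\"urth's bubble function technique: insert element-bubble-weighted interior residuals and face-bubble-weighted jump residuals as test functions into the residual identity and use inverse estimates, the equivalence of norms on finite-dimensional spaces, and the scaling dictated by $\mdiam{K}$ and $\ep^{-\halb}\mdiam{E}$, to recover $\eta^n_{\CT_n}$ up to $\normD{R_h(u_\CI)}$ and the data oscillation $\theta^n_{\CT_n}$. Here too $\nu\vfi(U^{n\theta})g^{n\theta}$ enters only through its elementwise value and, after passing to $\nu\vfi(\overline U^{n\theta}_{\CT_n})g^{n\theta}_{\CT_n}$, through its oscillation, so no step differs from \cite[Lemma 3.5]{TobVer15}. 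Robustness of $c^\flat$ and $c_\flat$ with respect to $\ep$, $\be$, and $\nu$ is inherited from that reference, since the mesh weights absorb all $\ep$- and $\be$-dependence and $\nu$ occurs only as a multiplicative factor of the right-hand side.

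The main point to check -- and essentially the only non-routine one -- is that $\nu\vfi(U^{n\theta})g^{n\theta}$ genuinely behaves like a data term rather than like a (non-differentiable) operator: because the estimate is for a fixed discrete solution, $U^{n\theta}$ is frozen and $\nu\vfi(U^{n\theta})g^{n\theta}$ is simply a known $L^2$-function, so the lack of differentiability of $\vfi$ plays no role here. One has only to verify that every occurrence of $f^{n\theta}$ in \cite[Lemma 3.5]{TobVer15} sits in a slot where merely its weighted elementwise $L^2$-norm is used -- which it does -- and that assumption (A4) supplies exactly the bound needed for the oscillation term. The argument is then a transcription of the linear case.
\end{BEWI}
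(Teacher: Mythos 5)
Your proposal is correct and follows exactly the route the paper takes: the paper's entire justification is the remark that one replaces the right-hand side $f^{n\theta}$ in \cite[Lemma 3.5]{TobVer15} by $\nu\vfi(U^{n\theta})g^{n\theta}$, treating the frozen non-linearity as a data term, and your sketch (quasi-interpolation plus Galerkin orthogonality with the stabilization for the upper bound, bubble functions for the lower bound, and the Lipschitz splitting of the oscillation into the two summands appearing in $\theta^n_{\CT_n}$) is precisely the fleshed-out version of that argument.
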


\subsection{A posteriori error estimates}\label{S:aposterr} Lemmas \ref{L:equireserr}, \ref{L:decompR}, \ref{L:datares}, \ref{L:lintempres}, \ref{L:convder-domdiff}, \ref{L:convder-domconv}, \ref{L:nonlintempreserr}, \ref{L:lintempresdom}, and \ref{L:spatres} yield the following a posteriori error estimates.

\begin{THM}[A posteriori error estimates]\label{T:apost}
The error between the solution $u$ of problem \eqref{E:varprob} and the solution $u_\CI$ of problem \eqref{E:discprob} is bounded from above by
\begin{equation*}
\begin{split}
&\left\{ \sup_{0 < t < T} \normL[\infty]{\Om}{u - u_\CI}^2 + \int_0^T \normE{u - u_\CI}^2 + \int_0^T \normD{\partial_t (u - u_\CI) + \cd (u - u_\CI)}^2 \right\}^\halb \\
&\quad\quad\le c^\ast \left\{ \normL{\Om}{u_0 - \pi_0 u_0}^2 \phantom{\sum_{n = 1}^{N_\CI}} \right. \\
&\quad\quad\quad\phantom{\sum_{n = 1}^{N_\CI}}
+ \sum_{n = 1}^{N_\CI} \tau_n \left[ \left(\eta^n_{\CT_n}\right)^2 + \normE{u^n_{\CT_n} - u^{n-1}_{\CT_{n-1}}}^2 + \left(\widetilde \eta^{n}_{\CT_n}\right)^2 + \normE{\widetilde u^{n}_{\CT_n}}^2 + \left(\widetilde \theta^{n}_{\CT_n}\right)^2 \right] \\
&\quad\quad\quad\phantom{c^\ast \Bigl\{}
+ \sum_{n = 1}^{N_\CI} \tau_n \left[ \left(\theta^n_{\CT_n}\right)^2 
+ \si_{\mathop{cip}} \left( \Theta^n_{\mathop{cip},\CT_n} \right)^2 \right] \\
&\quad\quad\quad\quad\;\; + \norm{g - g^{n\vartheta}}_{L^\infty(0,T;L^\infty(\Om))}^2 \left( 1 + \int_0^T \normE{u_\CI}^2 \right) \\
&\quad\quad\quad\quad\;\;\left. + \left( \norm{\Va - \Va^{n\theta}}_{L^\infty(0,T;L^\infty(\Om))}^2 + \norm{b - b^{n\theta}}_{L^\infty(0,T;L^\infty(\Om))}^2 \right) \int_0^T \normE{u_\CI}^2 \right\}^\halb
\end{split}
\end{equation*}
and on each interval $(t_{n-1},t_n]$, $1 \le n \le N_\CI$, from below by
\begin{equation*}
\begin{split}
&\tau_n^\halb \left\{\left(\eta^n_{\CT_n}\right)^2 + \normE{u^n_{\CT_n} - u^{n-1}_{\CT_{n-1}}}^2 + \left(\widetilde \eta^{n}_{\CT_n}\right)^2 + \normE{\widetilde u^{n}_{\CT_n}}^2 \right\}^\halb \\
&\quad\le c_\ast \left\{ \sup_{t_{n-1} \le t \le t_n} \normL{\Om}{u - u_\CI}^2 + \int_{t_{n-1}}^{t_n} \normE{u - u_\CI}^2  \right. \\
&\quad\quad\quad\quad+ \int_{t_{n-1}}^{t_n} \normD{\partial_t (u - u_\CI) + \cd (u - u_\CI)}^2 \\ 
&\quad\quad\quad\quad+ \tau_n \left(\theta^n_{\CT_n}\right)^2 \\
&\quad\quad\quad\quad + \norm{g - g^{n\vartheta}}_{L^\infty(t_{n-1},t_n;L^\infty(\Om))}^2 \left( 1 + \int_{t_{n-1}}^{t_n} \normE{u_\CI}^2 \right) \\
&\quad\quad\quad\quad + \left( \norm{\Va - \Va^{n\theta}}_{L^\infty(t_{n-1},t_n;L^\infty(\Om))}^2 \right. \\
&\quad\quad\quad\quad\quad\quad\left.\left. + \norm{b - b^{n\theta}}_{L^\infty(t_{n-1},t_n;L^\infty(\Om))}^2 \right) \int_{t_{n-1}}^{t_n} \normE{u_\CI}^2 \right\}^\halb \\
&\quad\quad + c_{\ast\ast} \left\{\tau_n^\halb \sup_{t_{n-1} \le t \le t_n} \normL{\Om}{u - u_\CI} + \tau_n \normL{\Om \times (t_{n-1},t_n)}{\partial_t} \right\}.
\end{split}
\end{equation*}
Here, the functions $\widetilde u^{n}_{\CT_n}$ and the indicators $\widetilde \eta^{n}_{\CT_n}$ and $\widetilde \theta^{n}_{\CT_n}$ are defined in Lemma \ref{L:convder-domconv}, and the quantities $\eta^n_{\CT_n}$, $\theta^n_{\CT_n}$,  and $\Theta^n_{\mathop{cip},\CT_n}$ are as in Lemma \ref{L:spatres}. The functions $\widetilde u^{n}_{\CT_n}$ and the indicators $\widetilde \eta^{n}_{\CT_n}$ and $\widetilde \theta^{n}_{\CT_n}$ may be dropped if $\ep \gtrsim 1$. The parameter $\si_{\mathop{cip}}$ equals $1$ for the continuous interior penalty scheme and vanishes for the other stabilizations. For arbitrary parameters $\ep$, $\be$, $\nu$, and $\ga$, the constant $c^\ast$ is proportional to $\nu L \la^2 \ga$ and $e^{\nu L \ga T}$ with factors depending on the shape parameters $\shape$ and $\shape[\widetilde \CT,\CT]$, the constant $c_\ast$ is proportional to $\nu L \la^2 \ga$ with factors depending on the shape parameters $\shape$ and $\shape[\widetilde \CT,\CT]$ and the polynomial degrees of the finite element functions, and the constant $c_{\ast\ast}$ is proportional to $\nu L \la \ga$. If $\ka = 2 \nu L \min \{ T, \la^2 \} \ga < 1$, the constant $c^\ast$only depends on $\ka$ and the shape parameters $\shape$ and $\shape[\widetilde \CT,\CT]$. If in addition $\widetilde \ka = 25 \left( 2 + \creac \right) \nu L \la^2 \ga < 1$, the constant $c_\ast$ only depends on $\widetilde \ka$, the shape parameters $\shape$ and $\shape[\widetilde \CT,\CT]$ and the polynomial degrees of the finite element functions and the constant $c_{\ast\ast}$ vanishes.
\end{THM}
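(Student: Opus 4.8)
The plan is to derive both inequalities by assembling the lemmas of Sections~\ref{S:equireserr}--\ref{S:spatres}, with the residual $R(u_\CI)$ serving as the pivot between the error and the computable quantities. For the upper bound I would start from the second inequality of Lemma~\ref{L:equireserr} (or its third inequality when $\ka<1$), which already controls $\normX{u-u_\CI}$ by $\bigl\{\normL{\Om}{u_0-u^0_{\CT_0}}^2+\normXD{T}{R(u_\CI)}^2\bigr\}^{1/2}$ times a factor involving $e^{\nu L\ga T}$ in general and only $\ka$ when $\ka<1$; this factor is the source of the stated dependence of $c^\ast$. It then remains to bound $\normXD{T}{R(u_\CI)}$. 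Using the additive splitting of Section~\ref{S:decompR} and the triangle inequality of the first part of Lemma~\ref{L:decompR}, on each interval $[t_{n-1},t_n]$ one has
\begin{equation*}
\begin{split}
\normXD[t_{n-1}]{t_n}{R(u_\CI)} &\le \normXD[t_{n-1}]{t_n}{R_{\tau,\text{lin}}(u_\CI)}+\normXD[t_{n-1}]{t_n}{R_{\tau,\text{nonlin}}(u_\CI)} \\
&\quad+\normXD[t_{n-1}]{t_n}{R_h(u_\CI)}+\normXD[t_{n-1}]{t_n}{R_D(u_\CI)}.
\end{split}
\end{equation*}

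Next I would bound the four summands. The linear temporal residual is treated by the upper bound of Lemma~\ref{L:lintempres}, after estimating the dual norm $\normD{\Va^{n\theta}\cdot\nabla(u^n_{\CT_n}-u^{n-1}_{\CT_{n-1}})}$ occurring there by Lemma~\ref{L:convder-domdiff} when $\ep\gtrsim 1$ and by the upper bound of Lemma~\ref{L:convder-domconv} when $\ep\ll 1$; the latter route is exactly what introduces $\widetilde\eta^{n}_{\CT_n}$, $\normE{\widetilde u^{n}_{\CT_n}}$ and, through its data error, $\widetilde\theta^{n}_{\CT_n}$, and what allows dropping these quantities in the diffusion-dominated case. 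The non-linear temporal residual is bounded by Lemma~\ref{L:nonlintempres}, which produces the factor $\nu L\la^2\ga$ times the already-present indicator $\normE{u^n_{\CT_n}-u^{n-1}_{\CT_{n-1}}}$; the spatial residual by the first estimate of Lemma~\ref{L:spatres}, yielding $\eta^n_{\CT_n}$, $\theta^n_{\CT_n}$ and the $\si_{\mathop{cip}}$-term; and the data residual by Lemma~\ref{L:datares}, the Remark after it turning $\int_{t_{n-1}}^{t_n}\normE{u_\CI}^2$ into the computable $\tfrac{\tau_n}{2}\bigl(\normE{u^n_{\CT_n}}^2+\normE{u^{n-1}_{\CT_{n-1}}}^2\bigr)$. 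Squaring, summing over $n=1,\dots,N_\CI$ and using $\sum_n\normXD[t_{n-1}]{t_n}{\cdot}^2=\normXD{T}{\cdot}^2$ then yields the asserted upper bound, the $\tau_n$-weighted sum of squared indicators appearing automatically.

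For the lower bound, fix $n$ and aim to control $\tau_n^{1/2}$ times the square root of $(\eta^n_{\CT_n})^2+\normE{u^n_{\CT_n}-u^{n-1}_{\CT_{n-1}}}^2+(\widetilde\eta^{n}_{\CT_n})^2+\normE{\widetilde u^{n}_{\CT_n}}^2$. By the lower bound of Lemma~\ref{L:lintempres} together with the lower bound of Lemma~\ref{L:convder-domconv}, $\normXD[t_{n-1}]{t_n}{R_{\tau,\text{lin}}(u_\CI)}$ controls, up to $c$-constants and modulo the data error $\widetilde\theta^{n}_{\CT_n}$, the contribution $\tau_n^{1/2}\bigl\{\normE{u^n_{\CT_n}-u^{n-1}_{\CT_{n-1}}}+\normE{\widetilde u^{n}_{\CT_n}}+\widetilde\eta^{n}_{\CT_n}\bigr\}$, and by the lower bound of Lemma~\ref{L:spatres}, $\normXD[t_{n-1}]{t_n}{R_h(u_\CI)}$ controls $\eta^n_{\CT_n}$ modulo $\theta^n_{\CT_n}$. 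Thus it suffices to bound $\bigl\{\normXD[t_{n-1}]{t_n}{R_{\tau,\text{lin}}(u_\CI)}^2+\normXD[t_{n-1}]{t_n}{R_h(u_\CI)}^2\bigr\}^{1/2}$, which is precisely the left-hand side of the second estimate of Lemma~\ref{L:decompR} --- or of Lemma~\ref{L:lintempresdom} when $\widetilde\ka<1$. That estimate bounds this quantity by $\normXD[t_{n-1}]{t_n}{R_\tau(u_\CI)+R_h(u_\CI)}$ plus $\normXD[t_{n-1}]{t_n}{R_{\tau,\text{nonlin}}(u_\CI)}$, the latter term being absent under $\widetilde\ka<1$, which is why $c_{\ast\ast}$ then vanishes. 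Writing $R_\tau+R_h=R(u_\CI)-R_D$, I would bound $\normXD[t_{n-1}]{t_n}{R(u_\CI)}$ by the error via the first inequality of Lemma~\ref{L:equireserr}, $\normXD[t_{n-1}]{t_n}{R_D(u_\CI)}$ by Lemma~\ref{L:datares} (giving the data-oscillation terms), and $\normXD[t_{n-1}]{t_n}{R_{\tau,\text{nonlin}}(u_\CI)}$ by Lemma~\ref{L:nonlintempreserr} (giving the $c_{\ast\ast}$-term in terms of $\sup_{t_{n-1}\le t\le t_n}\normL{\Om}{u-u_\CI}$ and $\normL{\Om\times(t_{n-1},t_n)}{\partial_t u}$). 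Collecting these, dividing by the explicit constants of Lemmas~\ref{L:lintempres}--\ref{L:decompR} and keeping track of the dependencies yields the lower bound.

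Since the analytical content is already contained in the cited lemmas, I expect the main obstacle to be organizational: bookkeeping the constants and handling the three superimposed case distinctions --- $\ep\gtrsim 1$ versus $\ep\ll 1$, $\ka<1$ versus general, $\widetilde\ka<1$ versus general --- so that $c^\ast$, $c_\ast$ and $c_{\ast\ast}$ depend on precisely the advertised quantities, in particular that $c^\ast$ acquires the factor $\nu L\la^2\ga$ from the non-linear temporal residual and $e^{\nu L\ga T}$ (or $(1-\ka)^{-1}$) from the energy argument, that $c_{\ast\ast}$ scales like $\nu L\la\ga$ and disappears once $\widetilde\ka<1$, and that the $\int\normE{u_\CI}^2$ contributions arising from the data residual remain on the computable side of the inequality. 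A further routine point is to check, using the transition condition, that the mismatch between the partitions $\CT_n$ and $\widetilde\CT_n$ entering $\eta^n_{\CT_n}$ and $\widetilde\eta^{n}_{\CT_n}$ is absorbed into the shape-parameter constants.
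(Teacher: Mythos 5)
Your proposal is correct and follows exactly the route the paper intends: the paper's ``proof'' of Theorem~\ref{T:apost} consists solely of the remark that Lemmas~\ref{L:equireserr}, \ref{L:decompR}, \ref{L:datares}, \ref{L:lintempres}, \ref{L:convder-domdiff}, \ref{L:convder-domconv}, \ref{L:nonlintempreserr}, \ref{L:lintempresdom}, and \ref{L:spatres} combine to give the two estimates, and your assembly --- error--residual equivalence, splitting $R=R_{\tau,\text{lin}}+R_{\tau,\text{nonlin}}+R_h+R_D$, bounding each piece by the corresponding lemma, and reversing the chain for the lower bound via the second part of Lemma~\ref{L:decompR} (resp.\ Lemma~\ref{L:lintempresdom} when $\widetilde\ka<1$) --- is precisely that combination, with the constants tracked as advertised.
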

%
%
\section*{Acknowledgements} Our sincere thanks are due to A.~Prohl for drawing our attention to the subject and for fruitful discussions.
%
%
\providecommand{\bysame}{\leavevmode\hbox to3em{\hrulefill}\thinspace}
\providecommand{\MR}{\relax\ifhmode\unskip\space\fi MR }
\providecommand{\MRhref}[2]{%
  \href{http://www.ams.org/mathscinet-getitem?mr=#1}{#2}
}
\providecommand{\href}[2]{#2}

\end{document}